\documentclass[a4paper,12pt]{article}
\usepackage[utf8]{inputenc}
\usepackage{amsmath,amssymb,amsthm,mathrsfs}
\usepackage{graphicx}
\usepackage{cite}
\usepackage{hyperref}
\hypersetup{
    colorlinks=true,
    linkcolor=blue
    }

\urlstyle{same}
\usepackage{pdflscape}
\usepackage{adjustbox}
\usepackage{setspace}
\usepackage{cellspace}

\usepackage{enumitem}
\usepackage[utf8]{inputenc}

 \usepackage{graphicx}
\usepackage{array}
\usepackage{geometry}
\usepackage{mathrsfs}

\theoremstyle{definition} \newtheorem{remark}{Remark}[section]
\theoremstyle{plain}  \newtheorem{theorem}{Theorem}[section]
\theoremstyle{plain}  \newtheorem{lemma}{Lemma}[section]
\theoremstyle{plain}  
\theoremstyle{plain}  \newtheorem{corollary}{Corollary}[section]
\theoremstyle{definition} \newtheorem{definition}{Definition}[section]
\theoremstyle{remark} \newtheorem{example}{Example}[section]

\numberwithin{equation}{section}

\def \MRA{multiresolution analysis}

\title{\textbf{Convergence of Subdivision Schemes and Smoothness of Refinable Functions on $ p $-adic Fields}}
\author{\textbf{Athira N$ ^{\ast} $ and Lineesh M C$^{\dagger} $}}
\date{}

\begin{document}

\maketitle
\begin{center}
\small{Department of Mathematics \\
National Institute of Technology Calicut \\
NIT Campus P O - 673 601, India \\
$ ^{\ast} $nathira.1996@gmail.com \\
$ ^{\dagger} $lineesh@nitc.ac.in}
\end{center}

\begin{abstract}
    A systematic and comprehensive study of $ p $-adic refinement equations and subdivision scheme associated with a finitely supported refinement mask are carried out in this paper. The $ L_{q} $-convergence of the subdivision scheme is characterized in terms of the $ q $-norm joint spectral radii of a collection of operators associated with the refinement mask. Also, the smoothness of complex-valued functions on $ \mathbb{Q}_{p} $\, is investigated.
\end{abstract}
\textbf{keywords:} $ p $-adic field, pseudo-differential operator, $ p $-adic refinable function, $ p $-adic wavelets, subdivision schemes, joint spectral radius, compact support, Lipschitz spaces, H\"{o}lder spaces, smoothness. \\
    
\textbf{AMS Subject Classification 2020:} 11E95, 11F85, 39B12, 42C15, 43A70.

 \section{Introduction}
Wavelets has been a very popular topic of conversation in many scientific and engineering gatherings nowadays. The goal of wavelet research is to create a set of basis functions that will give an informative, efficient, and useful description of a function or signal. Alf\'{r}ed Haar constructed the first wavelet in 1910. There after a lot of mathematicians including Yves Meyer, Stephane Mallat, and Ingrid Daubechies have constructed different kinds of wavelets to serve on many applications. The concept of wavelets is then extended to the Euclidean space as well as many other topological spaces such as $ p $-adic fields. \par 
Wavelets can be generated from scaling functions as well as wavelet sets. In their paper, Albeverio \textit{et al.}\cite{aes1,aes2} proposed a complete characterization of scaling functions and explained what types of scaling functions form $ p $-adic multiresolution analysis. Khrennikov \textit{et al.} \cite{kss2} described the procedure to construct $ p $-adic wavelets from $ p $-adic scaling functions associated with an expansive automorphism. The authors \cite{kss1} discussed about all compactly supported orthogonal wavelet bases for $ L_{2}(\mathbb{Q}_{p}) $\, generated by the unique p-adic multiresolution analysis, i.e., the Haar bases of $ L_{2}(\mathbb{Q}_{p}) $. \par 
In 2008, Shelkovich and Skopina \cite{ss} constructed infinitely many different multidimensional $ p $-adic Haar orthonormal wavelet bases for $ L_{2}(\mathbb{Q}_{p}^{n}) $\, and in 2009, Khrennikov and Shelkovich \cite{ks} developed infinite family of compactly supported non-Haar type $ p $-adic wavelet bases for $ L_{2}(\mathbb{Q}_{p}^{n}),\, n\geq 1 $. A method for constructing MRA-based $ p $-adic wavelet systems that form Riesz bases in $ L_{2}(\mathbb{Q}_{p}) $\, is developed in \cite{aes3}. \par 
The approximation and smoothness properties of wavelets are determined by the corresponding refinable functions. In their paper Athira and Lineesh \cite{al} discussed about the approximation order of shift-invariant space of a reﬁnable function on p-adic ﬁelds. The relation between the approximation order, accuracy of refinable function and order of the Strang–Fix condition are also established. \par 
Cavaretta, Dahmen, and Micchelli \cite{cdm} proved that the subdivision scheme associated with a refinement mask converges uniformly, provided that the normalized solution of the corresponding refinement equation is continuous and has stable shifts. In 1998, Bin Han and Jia \cite{bj} investigated about multivariate refinement equations associated with a dilation matrix and a finitely supported refinement mask. They characterize the $ L_{p} $-convergence of a subdivision scheme in terms of the $ p $-norm joint spectral radius of a collection of matrices associated with the refinement mask. Then Jia\cite{rqj2} characterized the optimal smoothness of a multivariate refinable function in terms of the spectral radius of the corresponding transition operator restricted to a suitable finite dimensional invariant subspace. In 1999, Jia, Riemenschneider and Zhou \cite{jrz} investigated the existence of multiple refinable functions using convergence of subdivision scheme and provided the smoothness properties of multiple refinable functions and multiple wavelets. \par 
Our goal is to extend the concept of subdivision scheme and smoothness to the $ p $-adic field $ \mathbb{Q}_{p} $. Section 2 contains preliminary information about scaling functions, wavelet functions and accuracy of scaling functions on $ \mathbb{Q}_{p} $. Section 3 is devoted to a study of joint spectral radii of a finite collection of linear operators associated to a refinement equation. Characterization of the $ L_{q} $-convergence of a subdivision scheme in terms of the $ q $-norm joint spectral radius of a collection of matrices associated with the refinement mask is given in section 4. We also provide some examples to illustrate the proposed theory. Section 5 contains a detailed analysis of smoothness of complex valued functions on $ \mathbb{Q}_{p} $.

\section{Preliminary}
Let $ p $\, be a prime number. Consider the completion field of $ \mathbb{Q} $\, with respect to the norm  $ \lvert \cdot  \rvert_{p} $\, defined by,
$$ \lvert x \rvert_{p} = \begin{cases}
                          0 & ;\, x = 0 \\
                          p^{-\gamma} & ;\,  x \neq 0, x= (p^{\gamma})\frac{m}{n},
                         \end{cases}
 $$
 where $ \gamma = \gamma(x) \in \mathbb{Z}, \, m,n \in \mathbb{Z} $\, not divisible by $ p $. Denote the above field as 
 $ G = \mathbb{Q}_{p} $. The canonical form of $ x \in \mathbb{Q}_{p}$, $ x \neq 0 $\, is, 
 \begin{equation}\label{eq1.1}
  x=p^{\gamma}(x_{0}+x_{1}p+x_{2}p^{2}+\cdots ),
 \end{equation}
where $ \gamma \in \mathbb{Z}, \, x_{j} \in \{ 0,1,\ldots, p-1\}, \, x_{0} \neq 0 $. Then for this $ x \in \mathbb{Q}_{p} $, 
the fractional part of $ x $\, is, 
$$ \{ x \}_{p} =\begin{cases}
		  0 & ;\, \gamma(x) \geq 0 \text { or } x = 0 \\
		  p^{\gamma}(x_{0}+x_{1}p+\cdots + x_{-\gamma-1}p^{-\gamma-1}) & ;\, \gamma(x) <0.
		\end{cases}
 $$ 
In $ \mathbb{Q}_{p} $, $ -1 $\, can be written as $$ -1 = (p-1)+(p-1)p+(p-1)p^{2}+ \cdots. $$ \par 
 The dual group of $ \mathbb{Q}_{p} $\, is $ \mathbb{Q}_{p} $\, itself and the character on $ \mathbb{Q}_{p} $\, is defined as,
 \begin{equation}\label{eq1.5}
  \chi(x, \xi) = e^{2\pi i \{x\xi\}_{p}},
 \end{equation}
  where $ \{\cdot \}_{p} $\, is the fractional part of a number. Denote 
 $$ B_{\gamma}(a) = \{ x \in \mathbb{Q}_{p} : \lvert x-a \rvert_{p} \leq p^{\gamma} \}. $$ Then $ B_{0}(0) $\, is a compact 
 open subgroup of $ \mathbb{Q}_{p} $. Let $ \mu $\, be the Haar measure on $ \mathbb{Q}_{p} $\, with $ \mu(B_{0}(0)) = 1 $. 
 Denote $ d\mu(x) $\, by $ dx $. \par 
 For $ 1 \leq q \leq \infty $, by $ L_{q}(\mathbb{Q}_{p}) $\, we denote the Banach space of all (complex-valued) measurable functions $ f $\, on $ \mathbb{Q}_{p} $\, such that $ \lVert f \rVert_{q} < \infty $, where
 $$ \lVert f \rVert_{q}:= (\int_{\mathbb{Q}_{p}} \lvert f(x) \rvert^{q} dx )^{1/q} \qquad \text{for } 1 \leq q < \infty. $$ and $ \lVert f \rVert_{\infty} $\, is the essential supremum of $ f $\, on $ \mathbb{Q}_{p} $. The Fourier transform a complex-valued function $ f $\, defined on $ \mathbb{Q}_{p} $\, is defined as 
 $$ \widehat{f}(\xi) := \int_{\mathbb{Q}_{p}} f(x) \chi(x, \xi) dx. $$ \par 
  \par 
 For the $ p $-adic analysis related to the mapping $ \mathbb{Q}_{p} \rightarrow \mathbb{C} $, the operation of differentiation is not defined. An analogy of the differentiation operator is a pseudo-differential operator. The pseudo-differential operator $ D^{\alpha} : \phi \rightarrow D^{\alpha}\phi $\, is defined 
\cite{vvz} by
\begin{equation}\label{eq1.2}
 D^{\alpha}\phi = f_{-\alpha} * \phi 
\end{equation}
where $ f_{\alpha}(x) = \frac{\lvert x \rvert_{p}^{\alpha -1}}{\Gamma_{p}(\alpha)} $\, with $ \Gamma_{p}(\alpha) = 
\frac{1-p^{\alpha-1}}{1-p^{-\alpha}} $. 
\begin{remark}
  For $ \alpha \in \mathbb{R} $\, and $ a \in \mathbb{Q}_{p} \setminus \{0\} $, $ D^{\alpha}\chi(a,x) = \lvert a 
  \rvert_{p}^{\alpha} \chi(a,x) $.
 \end{remark}
 \begin{remark}
 For $ \alpha \in \mathbb{R}, \, \gamma \in \mathbb{Z} $, let $ \Phi(x) = \delta(\lvert \xi \rvert_{p} - 
  p^{\gamma})f(\xi) $. Then $$ D^{\alpha}(\widehat{\Phi(x)}) = p^{\gamma \alpha} \widehat{\Phi(x)}. $$
\end{remark} 
Let us consider the set 
$$ I_{p} = \{ a = p^{-\gamma}(a_{0}+a_{1}p+ \cdots + a_{\gamma-1}p^{\gamma-1}) : \gamma \in \mathbb{N},\, a_{j} \in 
\{ 0,1,\ldots, p-1 \} \}. $$
Then there is a ``natural'' decomposition of $ \mathbb{Q}_{p} $\, into a union of mutually disjoint discs:
$ \mathbb{Q}_{p} = \bigcap_{a \in I_{p}} B_{0}(a) $. So, $ I_{p} $\, is a ``natural set'' of shifts for $ \mathbb{Q}_{p} $. We can see that $ I_{p}$\, is not a group. But $ I_{p} \approx \mathbb{Q}_{p}/B_{0}(0) $\, and $\mathbb{Q}_{p}/B_{0}(0) $\, is a group. From the next section, the operation of addition in $ I_{p} $\, is defined as $ \alpha + \beta (\text{mod } B_{0}(0)) $.  \par 
We denote by $ l(I_{p}) $\, the linear space of all sequences on $ I_{p} $, and by $ l_{0}(I_{p}) $\, the linear space of all 
finitely supported sequences on $ I_{p} $. For $ \alpha \in I_{p} $, we denote by $ \delta_{\alpha} $, the element in 
$ l_{0}(I_{p}) $\, given by 
$$ \delta_{\alpha}(\beta) = \begin{cases}
                             1 & \text{if } \beta =\alpha \\
                             0 & \text{if } \beta \in I_{p}\setminus\{\alpha\}.
                            \end{cases}
$$
In particular, we write $ \delta = \delta_{0} $. For $ y \in I_{p} $, we use $ \nabla_{y} $\, to denote the difference operator on $ l(I_{p}) $\, given by 
$$ \nabla_{y}u = u-u(\cdot-y), \qquad u \in l(I_{p}). $$ \par 
Define $ A:\mathbb{Q}_{p} \rightarrow \mathbb{Q}_{p} $\, by $ A(x) = \frac{1}{p}x $. Then, $ A $\, is an expansive 
automorphism with modulus, $ \lvert A \rvert = \mu(A(B_{0}(0))) = \mu(B_{1}(0)) = p $\, and $ A^{*} = A $.
\begin{definition}\cite{aes2} 
   A collection of closed spaces $ V_{j} \subset L_{2}(\mathbb{Q}_{p}), \, j \in \mathbb{Z} $, is called a \MRA (MRA) in 
   $ L_{2}(\mathbb{Q}_{p}) $\, if the following axioms hold: 
   \begin{enumerate}
    \item $ V_{j} \subset V_{j+1} $\, for all $ j \in \mathbb{Z} $;
    \item $ \bigcup_{j=-\infty}^{\infty} V_{j} $\, is dense in $ L_{2}(\mathbb{Q}_{p}) $;
    \item $ \bigcap_{j=-\infty}^{\infty} V_{j} = \{ 0 \} $;
    \item $ f(\cdot) \in V_{j} \Leftrightarrow f(A\cdot) \in V_{j+1} $\, for all $ j \in \mathbb{Z} $;
    \item there exists a function $ \phi \in V_{0} $\, such that $ \{\phi(\cdot -a),\, a \in I_{p}\} $\, is an orthonormal 
    basis for $ V_{0} $.
   \end{enumerate} 
  \end{definition}
 The function $ \phi $\, from axiom $ (5) $\, is called scaling function. Then we says that a MRA is generated by its scaling function $ \phi $\, (or $ \phi $\, generates the MRA). It follows immediately from axioms $ (4) $\, and $ (5) $\, that the functions $ p^{j/2}\phi(p^{-j}\cdot -a),\, a \in I_{p} $, form an orthonormal basis for $ V_{j}, \, j \in \mathbb{Z} $. Let $ \phi $\, be an orthogonal scaling function for a MRA $ \{ V_{j}\}_{j \in \mathbb{Z}} $, then 
\begin{equation}\label{eq1.4}
 \phi(x) = \sum_{a \in I_{p}} \alpha(a) \phi(p^{-1}x -a), \quad \alpha(a) \in \mathbb{C}.
\end{equation}
Such equations are called refinement equations, and their solutions are called refinable functions. \par 
Generally, a refinement equation \eqref{eq1.4} does not imply the inclusion property $ V_{0} \subset V_{1} $\, because the set 
of shifts $ I_{p} $\, does not form a group. Indeed, we need all the functions $ \phi(\cdot - b), b \in I_{p} $, to belong to 
the space $ V_{1} $, i.e., the identities $ \phi(x - b) = \sum_{a\in I_{p}} \alpha(a,b) \phi(p^{-1}x - a) $\, should be 
fulfilled for all $ b \in I_{p} $. Since $ p^{-1} b + a $\, is not in $ I_{p} $\, in general, we cannot argue that 
$ \phi(x - b) $\, belongs to $ V_{1} $\, for all $ b \in I_{p} $. Thus the refinement equation must be redefined as in the following definition.
\begin{definition}\cite{aes1,aes2}
 If $ \phi \in L_{2}(\mathbb{Q}_{p}) $\, is a refinable function and $ \text{supp}(\phi) \subset B_{N}(0) $, $ N \geq 0 $, then its 
 refinement equation is
 \begin{equation}\label{eq1.6}
  \phi(x) = \sum_{k=0}^{p^{N+1}-1} h\left(\frac{k}{p^{N+1}}\right) \phi \left( \frac{x}{p}-\frac{k}{p^{N+1}}\right), \quad \forall x \in \mathbb{Q}_{p},
 \end{equation}
with $ h \in l_{0}(I_{p}) $\, such that $ h(a) = 0 $\, for all $ a \in I_{p} \setminus \{k/p^{N+1}: k=0,1,\ldots, p^{N+1}-1 \} $ and 
\begin{equation}\label{eq1.7}
 \sum_{k=0}^{p^{N+1}-1} h\left(\frac{k}{p^{N+1}}\right) = p.
\end{equation}
\end{definition}
Taking Fourier transform on both sides of \eqref{eq1.6}, we obtain
\begin{equation}\label{eq1.8}
 \widehat{\phi}(\xi) = H(A^{-1}\xi)\widehat{\phi}(A^{-1}\xi), \quad \xi \in \mathbb{Q}_{p},
\end{equation}
where
\begin{equation}\label{eq1.9}
 H(\xi) = \frac{1}{p}\sum_{k=0}^{p^{N+1}-1} h\left(\frac{k}{p^{N+1}}\right)\chi\left(\frac{k}{p^{N+1}}, \xi\right), \quad \xi \in \mathbb{Q}_{p}.
\end{equation}

Denote by $ \mathscr{D}_{N}^{M} $\, the set of all $ p^{M} $-periodic functions supported on $ B_{N}(0) $. Then we have the following results about refinable functions \cite{aes2}.
\begin{theorem}\label{th1.1}
 A function $ \phi \in \mathscr{D}_{N}^{M}, \, M,N \geq 0 $, with $ \widehat{\phi}(0) \neq 0 $\, generates a MRA if and only if
 \begin{enumerate}
  \item $ \phi $\, is refinable;
  \item there exists at most $ p^{N} $\, integers $ l $\, such that $ 0 \leq l < p^{M+N} $\, and 
        $ \widehat{\phi}(\frac{l}{p^{M}}) \neq 0 $.
 \end{enumerate}
\end{theorem}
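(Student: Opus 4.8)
The plan is to verify the two implications of the equivalence by translating the MRA axioms into conditions on $\widehat{\phi}$. The key structural observation is that a function $\phi\in\mathscr{D}_{N}^{M}$ is a Bruhat--Schwartz test function whose Fourier transform $\widehat{\phi}$ is supported on $B_{M}(0)$ and is constant on the cosets of $B_{-N}(0)$; consequently $\widehat{\phi}$ is completely determined by the $p^{M+N}$ values $\widehat{\phi}(l/p^{M})$, $0\le l<p^{M+N}$, and condition (2) is exactly the statement that at most $p^{N}$ of these are nonzero. I will use the standard reformulation of the MRA axioms: with $V_{j}:=\overline{\operatorname{span}}\{p^{j/2}\phi(p^{-j}\cdot-a):a\in I_{p}\}$, the function $\phi$ generates an MRA if and only if (i) $\{\phi(\cdot-a):a\in I_{p}\}$ is orthonormal, (ii) $V_{0}\subset V_{1}$, (iii) $\overline{\bigcup_{j}V_{j}}=L_{2}(\mathbb{Q}_{p})$, and (iv) $\bigcap_{j}V_{j}=\{0\}$; by the dilation axiom it is enough to check the inclusion at level $0$.

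For the forward implication, assume $\phi$ generates an MRA. Refinability is immediate from $\phi\in V_{0}\subset V_{1}$: expanding $\phi$ in the orthonormal basis $\{p^{1/2}\phi(p^{-1}\cdot-a)\}_{a\in I_{p}}$ of $V_{1}$ gives an identity of the form \eqref{eq1.6}, and the hypothesis $\operatorname{supp}\phi\subset B_{N}(0)$ is precisely what renders \eqref{eq1.6} an admissible refinement equation; evaluating the Fourier identity \eqref{eq1.8} at $\xi=0$ and using $\widehat{\phi}(0)\neq0$ forces $H(0)=1$, i.e.\ the mask normalization \eqref{eq1.7}. For condition (2) I would combine orthonormality of the $I_{p}$-shifts with the refinement relation: the $\mathbb{Q}_{p}$-analogue of the periodization identity shows that orthonormality is equivalent to the sum of $|\widehat{\phi}|^{2}$ over a suitable set of coset representatives being constant equal to $\mu(B_{0}(0))=1$ almost everywhere, so each nonzero value $\widehat{\phi}(l/p^{M})$ contributes a block of mass $p^{-N}$ to a sum that integrates to $1$. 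Since the refinement relation \eqref{eq1.8} propagates the support of $\widehat{\phi}$ in a controlled way through the locally constant, suitably periodic factor $H$, the blocks cannot overlap freely, and counting mass bounds the number of nonzero values by $p^{N}$, which is (2).

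For the converse, assume (1) and (2). First, refinability together with the support condition implies that every $\phi(\cdot-b)$ with $b\in I_{p}$ belongs to $V_{1}$ --- this is exactly the point flagged in the discussion preceding \eqref{eq1.6}, and is settled using that the mask is supported on $\{k/p^{N+1}:0\le k<p^{N+1}\}$ --- hence $V_{0}\subset V_{1}$ and $V_{j}\subset V_{j+1}$ for all $j$. Orthonormality of the $I_{p}$-shifts follows by running the periodization argument in reverse: condition (2) caps the number of nonzero blocks of $|\widehat{\phi}|^{2}$ at $p^{N}$, and iterating \eqref{eq1.8}--\eqref{eq1.9} with the normalization \eqref{eq1.7} (forced by $\widehat{\phi}(0)\neq0$) shows the periodized sum equals exactly $1$, not merely at most $1$, which is orthonormality. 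Density of $\bigcup_{j}V_{j}$ follows from $\widehat{\phi}(0)\neq0$ by the usual argument that the orthogonal projections onto $V_{j}$ converge strongly to the identity, indicators of balls $B_{k}(0)$ being approximated by elements of $V_{j}$ for $j$ large; and $\bigcap_{j}V_{j}=\{0\}$ follows because any $f$ in the intersection has $\widehat{f}$ supported, for every $j$, in the shrinking set on which $\prod_{i=1}^{j}H(A^{-i}\cdot)$ is supported, which forces $\widehat{f}=0$. Collecting (i)--(iv) shows $\phi$ generates an MRA.

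The main obstacle is the periodization and mass-counting step in both directions --- making precise the equivalence between condition (2) and orthonormality of the $I_{p}$-translates. The difficulty is that $I_{p}$ is not a subgroup of $\mathbb{Q}_{p}$ (as stressed in Section~2), so the classical Fourier-side periodization over a dual lattice must be replaced by summation over a carefully chosen system of representatives of $\mathbb{Q}_{p}/B_{0}(0)$, and one must show that the mask normalization \eqref{eq1.7} is exactly what upgrades the estimate ``$\le p^{N}$ nonzero values of total mass $\le 1$'' to the equality required for orthonormality. Once this Fourier-analytic dictionary is in place, the density axiom and the triviality of $\bigcap_{j}V_{j}$ are comparatively routine.
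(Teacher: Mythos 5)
The paper gives no proof of Theorem~\ref{th1.1} (it is quoted from \cite{aes2}), so your sketch has to stand on its own, and it has a genuine gap at its core: you build both directions around the claim that ``$\phi$ generates an MRA'' includes orthonormality of the $I_{p}$-shifts of $\phi$ itself, and in the converse you propose to \emph{prove} that orthonormality from (1) and (2). That cannot work, because conditions (1), (2) and $\widehat{\phi}(0)\neq0$ are invariant under $\phi\mapsto c\phi$: for $M=N=0$ the function $\phi=c\,1_{B_{0}(0)}$ with $\lvert c\rvert\notin\{0,1\}$ satisfies the refinement equation \eqref{eq1.6} with mask $h(k/p)=1$, satisfies (2) (only $l=0$ occurs), yet its shifts are not orthonormal. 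So the theorem is only true when ``generates an MRA'' is read as in \cite{aes2}: the spaces $V_{j}$ spanned by the dilated $I_{p}$-shifts of $\phi$ satisfy the axioms, with the orthogonal function in axiom (5) allowed to be a \emph{different} element of $V_{0}$. Under that reading your forward direction may not expand $\phi$ in ``the orthonormal basis $\{p^{1/2}\phi(p^{-1}\cdot-a)\}$ of $V_{1}$'' (it need not be one), and your converse is aiming at a false target; the real content of (2) is that it permits the construction of an orthogonal scaling function for $V_{0}$, and that construction is entirely absent from the sketch.

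Even apart from this, the two steps you flag as the crux are not merely unfinished but fail as sketched. Mass counting only gives $\sum_{l}\lvert\widehat{\phi}(l/p^{M})\rvert^{2}p^{-N}=\lVert\phi\rVert_{2}^{2}$, which bounds the number of values of modulus at least $1$, not the number of nonzero values; to reach (2) you would need the nonzero values to have modulus exactly $1$, which is itself an orthogonality-type statement requiring proof. Moreover the criterion ``$B_{0}(0)$-periodization of $\lvert\widehat{\phi}\rvert^{2}$ equals $1$ a.e.\ iff the shifts are orthonormal'' is not available off the shelf here: orthonormality involves $\langle\phi,\phi(\cdot-(a-b))\rangle$ with $a-b\notin I_{p}$, and $\chi(a-b,\cdot)$ is not constant on cosets of $B_{0}(0)$ inside the support $B_{M}(0)$ of $\widehat{\phi}$ once $M\geq1$, so the classical lattice computation does not go through verbatim --- this is exactly the non-group difficulty you acknowledge, and it is the hard part of the theorem, not a routine adaptation. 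Finally, your claim that refinability in the sense of \eqref{eq1.6} ``settles'' $V_{0}\subset V_{1}$ skips the very obstruction the paper raises before Definition~2.2: substituting $x-b$ into \eqref{eq1.6} produces shifts $b/p+k/p^{N+1}$ which for $N\geq1$ can leave $I_{p}$ (e.g.\ $p=2$, $N=1$, $b=3/4$, $k=3$ gives $9/8=1/8+1$), so an additional argument using the support and local constancy of $\phi$ is needed and is not supplied.
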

\begin{theorem}\label{th1.2}
 Let $ \widehat{\phi} $\, be defined by \eqref{eq1.8}, where $ H $\, is the trigonometric polynomial \eqref{eq1.9} with 
 $ H(0) = 1 $. If $ H(k) = 0 $\, for all $ k = 1,\ldots,p^{N+1}-1 $\, not divisible by $ p $, then $ \phi \in \mathscr{D}_{N}^{0} $. 
 If furthermore, $ \lvert H(k) \rvert = 1 $\, for all $ k = 1,\ldots,p^{N+1}-1 $\, divisible by $ p $, then 
 $ \{\phi(x -a),\, a \in I_{p}\} $\, is an orthonormal system. Conversely, if $ \text{supp }\widehat{\phi} \subset B_{0}(0) $\, 
 and the system $ \{\phi(x -a),\, a \in I_{p}\} $\, is orthonormal, then $ \lvert H(k) \rvert = 0 $\, whenever $ k $\, is 
 not divisible by $ p $, $ \lvert H(k) \rvert = 1 $\, whenever $ k $\, is divisible by $ p $, $ k = 1,\ldots,p^{N+1}-1 $, and 
 $ \lvert \widehat{\phi}(x) \rvert = 1 $\, for any $ x \in B_{0}(0) $.
\end{theorem}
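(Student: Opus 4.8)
The plan is to work on the Fourier side and exploit the fact that the infinite product produced by \eqref{eq1.8} is, pointwise, a \emph{finite} product. First I would record that since $H$ in \eqref{eq1.9} is a finite linear combination of the characters $\xi\mapsto\chi(k/p^{N+1},\xi)$ with $0\le k<p^{N+1}$, and $|k|_{p}\le 1$ for an integer $k$, the function $H$ is invariant under translations by $B_{-(N+1)}(0)$; combined with \eqref{eq1.7} this gives $H(\xi)=1$ whenever $|\xi|_{p}\le p^{-(N+1)}$. Iterating \eqref{eq1.8} then yields $\widehat{\phi}(\xi)=\widehat{\phi}(0)\prod_{j\ge 1}H(A^{-j}\xi)$ with $A^{-j}\xi=p^{j}\xi$; since $|p^{j}\xi|_{p}=p^{-j}|\xi|_{p}\to 0$, all but finitely many factors equal $1$, and the same estimate shows $\widehat{\phi}$ is locally constant with parameter $N$ (if $|\xi-\xi'|_{p}\le p^{-N}$ then $|p^{j}\xi-p^{j}\xi'|_{p}\le p^{-(N+1)}$ for every $j\ge 1$).

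For the first assertion I would take $\xi$ with $|\xi|_{p}=p^{m}$, $m\ge 1$. Then $|p^{m}\xi|_{p}=1$, so $p^{m}\xi$ is a $p$-adic unit, hence congruent modulo $B_{-(N+1)}(0)$ to some $k\in\{1,\dots,p^{N+1}-1\}$ with $p\nmid k$; therefore $H(p^{m}\xi)=H(k)=0$ by hypothesis, and the $j=m$ factor kills the product, so $\widehat{\phi}(\xi)=0$. Hence $\text{supp}\,\widehat{\phi}\subset B_{0}(0)$, so $\widehat{\phi}\in\mathscr{D}_{0}^{N}$ and therefore $\phi\in\mathscr{D}_{N}^{0}$ by the Fourier duality between $\mathscr{D}_{N}^{M}$ and $\mathscr{D}_{M}^{N}$.

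For the two orthonormality statements I would first isolate the criterion: assuming $\text{supp}\,\widehat{\phi}\subset B_{0}(0)$, the system $\{\phi(\cdot-a):a\in I_{p}\}$ is orthonormal if and only if $|\widehat{\phi}(\xi)|=1$ for every $\xi\in B_{0}(0)$. Indeed, $\text{supp}\,\widehat{\phi}\subset B_{0}(0)$ makes $\phi$ invariant under translations by $B_{0}(0)$, so for $a,b\in I_{p}$ one may rewrite $\langle\phi(\cdot-a),\phi(\cdot-b)\rangle$ as $\langle\phi(\cdot-c),\phi\rangle$, where $c\in I_{p}$ represents the coset $(a-b)+B_{0}(0)$ (so $c=0$ exactly when $a=b$); by Plancherel the latter equals $\int_{B_{0}(0)}\chi(c,\xi)|\widehat{\phi}(\xi)|^{2}\,d\xi$, and since the characters $\xi\mapsto\chi(c,\xi)$ on $B_{0}(0)$, as $c$ runs over $I_{p}$, exhaust the distinct characters of the compact group $B_{0}(0)$, the orthonormality relations are equivalent to $|\widehat{\phi}|^{2}\equiv 1$ a.e.\ on $B_{0}(0)$, which upgrades to equality everywhere because $\widehat{\phi}$ is locally constant. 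Granting this, the second assertion follows: by the first part $\text{supp}\,\widehat{\phi}\subset B_{0}(0)$, and for $\xi\in B_{0}(0)$ each factor $H(p^{j}\xi)$, $j\ge 1$, is evaluated at a point of $B_{-1}(0)$, hence, by the translation-invariance of $H$, at an integer multiple of $p$ in $[0,p^{N+1}-1]$, where $|H|=1$ by the extra hypothesis and $H(0)=1$; thus $|\widehat{\phi}(\xi)|=|\widehat{\phi}(0)|$ on $B_{0}(0)$, which is $1$ under the normalization $\widehat{\phi}(0)=1$.

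For the converse, orthonormality together with $\text{supp}\,\widehat{\phi}\subset B_{0}(0)$ gives $|\widehat{\phi}(\xi)|=1$ on $B_{0}(0)$ by the same criterion, which is the last claim. Substituting into $\widehat{\phi}(\xi)=H(p\xi)\widehat{\phi}(p\xi)$: for $\xi\in B_{0}(0)$ both $|\widehat{\phi}(\xi)|=1$ and $|\widehat{\phi}(p\xi)|=1$, so $|H(p\xi)|=1$, i.e.\ $|H|\equiv 1$ on $B_{-1}(0)$, which covers every $k\in\{1,\dots,p^{N+1}-1\}$ divisible by $p$; and for $|\xi|_{p}=p$ we have $\widehat{\phi}(\xi)=0$ while $|\widehat{\phi}(p\xi)|=1$, forcing $H(p\xi)=0$, i.e.\ $H$ vanishes on the units $B_{0}(0)\setminus B_{-1}(0)$, which covers every $k\in\{1,\dots,p^{N+1}-1\}$ not divisible by $p$. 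I expect the main obstacle to be the bookkeeping in the orthonormality criterion: correctly handling that $I_{p}$ is not a group, so that $\phi(\cdot-a)$ and $\phi(\cdot-b)$ cannot be compared as a single translate without using the $B_{0}(0)$-periodicity of $\phi$, and matching the characters $\chi(c,\cdot)$ with the dual of $B_{0}(0)$. Once that reduction is clean, the remaining implications are immediate from \eqref{eq1.8} and the invariance of $H$ under $B_{-(N+1)}(0)$.
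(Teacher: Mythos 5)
The paper itself gives no proof of Theorem \ref{th1.2}: it is stated in the preliminaries as a quoted result from \cite{aes2}, so there is no internal proof to compare against. Your argument is correct and follows the standard route of that source --- the pointwise-finite product $\widehat{\phi}(\xi)=\prod_{j\ge 1}H(A^{-j}\xi)$, the invariance of $H$ under translations by $B_{-(N+1)}(0)$, the Fourier duality between support and local constancy, and the reduction of orthonormality of $\{\phi(\cdot-a)\}_{a\in I_{p}}$ to $\lvert\widehat{\phi}\rvert\equiv 1$ on $B_{0}(0)$ via the characters $\chi(c,\cdot)$, $c\in I_{p}$, of the compact group $B_{0}(0)$. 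The one point you should state explicitly is that ``defined by \eqref{eq1.8}'' must be read as the normalized infinite product with $\widehat{\phi}(0)=1$ (equation \eqref{eq1.8} alone does not determine $\widehat{\phi}$, and the conclusion $\lvert\widehat{\phi}\rvert=1$ in the second assertion uses $\lvert\widehat{\phi}(0)\rvert=1$); with that convention, every step of your proposal checks out.
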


Suppose we have a p-adic MRA generated using a scaling function $ \phi $\, satisfying the refinement equation \eqref{eq1.6}, then 
the wavelet functions $ \psi_{j}, \, j = 1,\ldots, p-1 $\, are in the form
\begin{equation}\label{eq1.10}
 \psi_{j}(x) = \sum_{k=0}^{p^{N+1}-1} h_{j}\left(\frac{k}{p^{N+1}}\right) \phi \left( \frac{x}{p}-\frac{k}{p^{N+1}}\right), \quad 
 \forall x \in \mathbb{Q}_{p},
\end{equation}
where the coefficients $ h_{j}(\frac{k}{p^{N+1}}) $\, are chosen such that
\begin{equation}\label{eq1.11}
 <\psi_{j},\phi(\cdot-a)> = 0, \, \, <\psi_{j}, \psi_{k}(\cdot - a)> = \delta_{j,k} \delta_{0,a}, \,\, j,k = 1,\ldots, p-1, 
\end{equation}
for any $ a \in I_{p} $. \newline 
Set $$ h = \frac{1}{\sqrt{p}}\left(h(0),\ldots,h\left(\frac{p^{N+1}-1}{p^{N+1}}\right)\right),$$ 
$$ h_{j} = \frac{1}{\sqrt{p}}\left(h_{j}(0),\ldots,h_{j}\left(\frac{p^{N+1}-1}{p^{N+1}}\right)\right),\, j = 1,\ldots, p-1, $$
and
$$ S= \begin{bmatrix}
      0 & 0 & \cdots & 0 & 1 \\
      1 & 0 & \cdots & 0 & 0 \\
      0 & 1 & \cdots & 0 & 0 \\
      \cdots & \cdots & \cdots & \cdots \\
      0 & 0 & \cdots & 1 & 0 
     \end{bmatrix}. $$ 
In order to satisfy \eqref{eq1.11}, we need to find $ h_{j}, \, j = 1, \ldots, p-1 $\, such that the matrix
\begin{equation}\label{eq1.12}
 U = (S^{0}h,\ldots,S^{p^{N}-1}h,S^{0}h_{1},\ldots,S^{p^{N}-1}h_{1}, \ldots, S^{0}h_{p-1},\ldots,S^{p^{N}-1}h_{p-1})
\end{equation}
is unitary.\cite{kss2} \par 
In order to solve the refinement equation \eqref{eq1.6}, we start with a compactly supported function $ \phi \in L_{q}(\mathbb{Q}_{p})\, (1 \leq q \leq \infty) $\, and use the iteration scheme $ f_{n} := Q_{h}^{n}\phi,\,
n = 0, 1, 2,\ldots $, where $ Q_{h} $\, is the bounded linear operator on $ L_{q}(\mathbb{Q}_{p}) $\, given by
\begin{equation}\label{eq1.13}
 Q_{h}\phi := \sum_{\alpha \in I_{p}}h(\alpha)\phi(A\cdot-\alpha), \quad \phi \in L_{q}(\mathbb{Q}_{p}).
\end{equation}
This iteration scheme is called a subdivision scheme (\cite{cdm}) which is also referred to as a cascade algorithm.

\section{Subdivision operator and Joint spectral radius}

This section is devoted to the study of subdivision operator associated with refinement mask and joint spectral radii of a finite collection of linear operators. 
\begin{definition}
 The subdivision operator $ S_{h} $\, associated with the refinement mask $ h $\, is the linear operator on $ l(I_{p}) $\, defined by 
 \begin{equation}\label{eq3.1}
  S_{h}u(\alpha) := \sum_{\beta \in I_{p}} h(\alpha - A\beta)u(\beta), \qquad \alpha \in I_{p}, 
 \end{equation}
where $ u \in l(I_{p}) $. 
\end{definition}
Then $ S_{h}\delta(\alpha) = h(\alpha) $. Now, for $ f \in L_{q}(\mathbb{Q}_{p})\, (1\leq q \leq \infty) $\, we have 
$$ Q_{h}f = \sum_{\alpha \in I_{p}} S_{h}\delta(\alpha)f(A\cdot -\alpha). $$ By induction on $ n $, we get 
$$ \begin{aligned}
    Q_{h}^{n} f & = Q_{h}^{n-1}(Q_{h}f) = \sum_{\alpha \in I_{p}} S_{h}^{n-1}\delta(\alpha)Q_{h}f(A^{n-1}\cdot-\alpha) \\
    & = \sum_{\alpha \in I_{p}} S_{h}^{n}\delta(\alpha)\sum_{\beta \in I_{p}}h(\beta)f(A^{n}\cdot -A\alpha-\beta) \\
    & = \sum_{\beta \in I_{p}} [\sum_{\alpha \in I_{p}} h(\beta-A\alpha)S_{h}^{n}\delta(\alpha)]f(A^{n}\cdot -\beta) \\
    & = \sum_{\beta \in I_{p}} S_{h}^{n}\delta(\beta)f(A^{n}\cdot -\beta).
   \end{aligned}
$$ That is 
\begin{equation}\label{eq3.2}
 Q_{h}^{n} f = \sum_{\beta \in I_{p}} S_{h}^{n}\delta(\beta)f(A^{n}\cdot -\beta).
\end{equation}
We can observe that any compactly supported refinable function $ \phi $\, satisfies the relation $ Q_{h}^{n}\phi = \phi $\, for all $ n \in \mathbb{N} $. \par 
Let $ \mathcal{A} $\, be a finite collection of linear operators on a finite dimensional vector space $ V $. A vector norm $ \lVert \cdot \rVert $\, on $ V $\, induces a norm on the linear operators on $ V $\, as follows. For a linear operator $ A $\, on $ V $, define
$$ \lVert A \rVert := \max_{\lVert v \rVert = 1}\{ \lVert Av \rVert \}. $$
For a positive integer $ n $, we denote by $ \mathcal{A}^{n} $\, the Cartesian power of $ \mathcal{A} $:
$$ \mathcal{A}^{n} = \{ (A_{1},\ldots,A_{n}): A_{1},\ldots, A_{n} \in \mathcal{A} \}. $$
When $ n = 0 $, we interpret $ \mathcal{A}^{0} $\, as the set $ \{I\} $, where $ I $\, is the identity mapping on $ V $. Let
$$ \lVert \mathcal{A}^{n} \rVert_{\infty} := \max \{ \lVert A_{1}\cdots A_{n} \rVert : (A_{1},\ldots,A_{n}) \in \mathcal{A}^{n} \}. $$
Then the uniform joint spectral radius of $ \mathcal{A} $, introduced by Rota and Strang \cite{rs}, is defined as
$$ \rho_{\infty}(\mathcal{A}) := \lim_{n \rightarrow \infty} \lVert \mathcal{A}^{n} \rVert_{\infty}^{1/n}. $$
  \par 
The $ q $-norm joint spectral radius of a finite collection of linear operators was introduced by Jia \cite{rqj4}. For $ 1 \leq q < \infty $,
$$ \lVert \mathcal{A}^{n} \rVert_{q} := \left( \sum_{(A_{1},\ldots,A_{n}) \in \mathcal{A}^{n}} \lVert A_{1}\cdots A_{n} \rVert^{q} \right)^{1/q}. $$
For $ 1 \leq q \leq \infty $, the $ q $-norm joint spectral radius of $ \mathcal{A} $\, is defined to be
$$ \rho_{q}(\mathcal{A}) := \lim_{n \rightarrow \infty} \lVert \mathcal{A}^{n} \rVert_{q}^{1/n}. $$
It is easily seen that this limit indeed exists, and
$$ \lim_{n \rightarrow \infty} \lVert \mathcal{A}^{n} \rVert_{q}^{1/n} = \inf_{n \geq 1} \lVert \mathcal{A}^{n} \rVert_{q}^{1/n}. $$
Clearly, $ \rho_{q}(\mathcal{A}) $\, is independent of the choice of the vector norm on $ V $.\par
If $ \mathcal{A} $\, consists of a single linear operator $ A $, then
$$ \rho_{q}(\mathcal{A}) = \rho(A), $$
where $ \rho(A) $\, denotes the spectral radius of $ A $, which is independent of $ q $. If $ \mathcal{A} $\, consists of more than one element, then $ \rho_{q}(\mathcal{A}) $\, depends on $ q $\, in general. By some basic properties of $ l_{q} $\, spaces, we have that, for $ 1 \leq q \leq r \leq \infty $,
$$ (\#\mathcal{A}^{[(1/r)-(1/q)]}\rho_{q}(\mathcal{A}) \leq \rho_{r}(\mathcal{A}) \leq \rho_{q}(\mathcal{A}), $$
where $ \#\mathcal{A} $\, denotes the number of elements in $ \mathcal{A} $. Furthermore, it is easily seen from the definition of the joint spectral radius that $ \rho(A) \leq \rho_{\infty}(\mathcal{A}) $\, for any element $ A $\, in $ \mathcal{A} $. \par
Recall that $ S_{h} $\, is the subdivision operator given in \eqref{eq3.1}. From \eqref{eq3.2} we see that, in order to study convergence of the subdivision scheme, we need to analyze the sequences $ S_{h}^{n}\delta $, $ n = 1, 2, \ldots $ For this purpose, we introduce the biinfinite matrices
$ A_{\epsilon}\, (\epsilon \in I_{p}) $\, as follows:
\begin{equation}\label{eq3.3}
 A_{\epsilon}(\alpha, \beta) := h(\epsilon+A\alpha-\beta), \qquad \alpha, \beta \in I_{p}.
\end{equation}
\begin{lemma}\label{lm3.1}
 Suppose $ \alpha = \epsilon_{1}+A\epsilon_{2}+\cdots+A^{n-1}\epsilon_{n}+A^{n}\gamma $, where $ \epsilon_{1},\epsilon_{2},\ldots,\epsilon_{n},\gamma \in I_{p} $. Then for any $ \beta \in I_{p} $, 
 $$ S_{h}^{n}\delta(\alpha -\beta) = A_{\epsilon_{n}}\cdots A_{\epsilon_{1}}(\gamma,\beta). $$
\end{lemma}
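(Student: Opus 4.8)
The plan is to prove the identity by induction on $n$. For the base case $n=1$ the statement reads $\alpha=\epsilon_1+A\gamma$, and since $S_h\delta(\cdot)=h(\cdot)$ we get directly from the definition \eqref{eq3.3} of the matrices $A_\epsilon$ that
\[
 S_h\delta(\alpha-\beta)=h(\alpha-\beta)=h(\epsilon_1+A\gamma-\beta)=A_{\epsilon_1}(\gamma,\beta),
\]
which is the claim.

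For the inductive step, assume the identity holds for some $n\ge 1$ and let $\alpha=\epsilon_1+A\epsilon_2+\cdots+A^{n}\epsilon_{n+1}+A^{n+1}\gamma$. The first move is to split off the lowest-order term, writing $\alpha=\epsilon_1+A\alpha'$ with $\alpha':=\epsilon_2+A\epsilon_3+\cdots+A^{n-1}\epsilon_{n+1}+A^{n}\gamma$, so that $\alpha'$ has exactly the form to which the induction hypothesis applies, with shifts $\epsilon_2,\ldots,\epsilon_{n+1}$ and final index $\gamma$. Using $S_h^{n+1}\delta=S_h(S_h^{n}\delta)$ and the definition \eqref{eq3.1} of $S_h$,
\[
 S_h^{n+1}\delta(\alpha-\beta)=\sum_{\zeta\in I_p} h\big((\alpha-\beta)-A\zeta\big)\,S_h^{n}\delta(\zeta).
\]
Reindex the sum by $\zeta=\alpha'-\tau$, $\tau\in I_p$; since $I_p\approx\mathbb{Q}_p/B_0(0)$ is a group, $\tau\mapsto\alpha'-\tau$ is a bijection of $I_p$. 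After the substitution the argument of $h$ becomes $\epsilon_1+A\alpha'-\beta-A(\alpha'-\tau)=\epsilon_1+A\tau-\beta$, while the induction hypothesis gives $S_h^{n}\delta(\alpha'-\tau)=A_{\epsilon_{n+1}}\cdots A_{\epsilon_2}(\gamma,\tau)$. Hence
\[
 S_h^{n+1}\delta(\alpha-\beta)=\sum_{\tau\in I_p} A_{\epsilon_{n+1}}\cdots A_{\epsilon_2}(\gamma,\tau)\,h(\epsilon_1+A\tau-\beta)=\sum_{\tau\in I_p} A_{\epsilon_{n+1}}\cdots A_{\epsilon_2}(\gamma,\tau)\,A_{\epsilon_1}(\tau,\beta),
\]
and the right-hand side is precisely the $(\gamma,\beta)$ entry of the matrix product $A_{\epsilon_{n+1}}\cdots A_{\epsilon_1}$, completing the induction.

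The routine parts are the expansion of $S_h^{n+1}\delta$ and recognizing the matrix product. The step I expect to require the most care is the reindexing $\zeta=\alpha'-\tau$ together with the simplification $A\alpha'-A(\alpha'-\tau)=A\tau$: because $I_p$ is only a set of coset representatives for $\mathbb{Q}_p/B_0(0)$ and the dilation $A$ does not commute with reduction modulo $B_0(0)$ on the nose, one must check that these manipulations are legitimate — e.g.\ by performing them with honest representatives and verifying that the resulting discrepancies lie in $B_0(0)$, so that the values of $h$ are unchanged, or by checking that the terms where this could fail lie outside $\operatorname{supp}h$. Once this bookkeeping is settled the argument closes. As a sanity check it may also help to record, by iterating \eqref{eq3.1} as in the derivation of \eqref{eq3.2}, the closed form $S_h^{n}\delta(\eta)=\sum h(\eta-A\beta_1)h(\beta_1-A\beta_2)\cdots h(\beta_{n-1})$, and to match it term-by-term against the full expansion of $A_{\epsilon_n}\cdots A_{\epsilon_1}(\gamma,\beta)$ under the substitutions $\beta_{n-k}=\epsilon_{n-k+1}+A\epsilon_{n-k+2}+\cdots+A^{k-1}\epsilon_n+A^{k}\gamma-\mu_{n-k}$; this gives an alternative, non-inductive derivation of the same identity.
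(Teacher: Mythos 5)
Your proof is correct and follows essentially the same route as the paper's: induction on $n$, splitting off $\alpha=\epsilon_{1}+A\alpha'$, reindexing the sum (the paper's substitution $\eta\mapsto\alpha_{1}-\eta$), applying the induction hypothesis, and recognizing the resulting sum as the $(\gamma,\beta)$ entry of $A_{\epsilon_{n}}\cdots A_{\epsilon_{1}}$. The bookkeeping you flag about addition in $I_{p}$ being taken modulo $B_{0}(0)$ is likewise left implicit in the paper's argument, so no substantive difference remains.
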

\begin{proof}
 The proof proceeds by induction on $ n $. For $ n = 1 $\, and $ \alpha = \epsilon_{1} + A\gamma $, we have
$$ \begin{aligned}
    S_{h}\delta(\alpha-\beta) & = \sum_{\eta \in I_{p}} h(\alpha-\beta-A\eta)\delta(\eta) = h(\alpha-\beta) \\ & = h(\epsilon_{1}+A\gamma-\beta) = A_{\epsilon_{1}}(\gamma,\beta).  
   \end{aligned} $$
Suppose $ n > 1 $\, and the lemma has been verified for $ n - 1 $. For $ \alpha = \epsilon_{1} + A \alpha_{1} $, where $ \alpha_{1} , \epsilon_{1} \in I_{p} $, we have
\begin{equation}\label{eq3.4}
 \begin{aligned}
  S_{h}^{n}\delta(\alpha-\beta) & = \sum_{\eta \in I_{p}} h(\alpha-\beta-A\eta)S_{h}^{n-1}\delta(\eta) \\ & = \sum_{\eta \in I_{p}} h(\epsilon_{1}+A\alpha_{1}-\beta-A\eta)S_{h}^{n-1}\delta(\eta) \\ & = \sum_{\eta \in I_{p}} h(\epsilon_{1}+A\eta-\beta)S_{h}^{n-1}\delta(\alpha_{1}-\eta).
 \end{aligned}
\end{equation}
Suppose $ \alpha_{1} = \epsilon_{2}+A\epsilon_{3}+\cdots+A^{n-2}\epsilon_{n}+A^{n-1}\gamma $. Then by the induction hypothesis we have
$$ S_{h}^{n-1}\delta(\alpha_{1} -\eta) = A_{\epsilon_{n}}\cdots A_{\epsilon_{2}} (\gamma,\eta). $$
This, in connection with \eqref{eq3.4}, gives
$$ S_{h}^{n}\delta(\alpha -\beta) = \sum_{\eta \in I_{p}} A_{\epsilon_{1}}(\eta,\beta) A_{\epsilon_{n}}\cdots A_{\epsilon_{2}} (\gamma,\eta) = A_{\epsilon_{n}}\cdots A_{\epsilon_{1}}(\gamma,\beta), $$
thereby completing the induction procedure.
\end{proof}
The biinfinite matrices $ A_{\epsilon}\, (\epsilon \in I_{p}) $\, defined in \eqref{eq3.3} may be viewed as the linear operators on $ l_{0}(I_{p}) $\, given by
\begin{equation}\label{eq3.5}
 A_{\epsilon}v(\alpha) = \sum_{\beta \in I_{p}} h(\epsilon+A\alpha-\beta)v(\beta), \qquad v \in l_{0}(I_{p}),\, \alpha \in I_{p}.
\end{equation} \par 
Now let $ \mathcal{A} $\, be a finite collection of linear operators on a vector space $ V $, which is not necessarily finite dimensional. A subspace $ W $\, of $ V $\, is said to be $ \mathcal{A} $-invariant if it is invariant under every operator $ A $\, in $ \mathcal{A} $. Let $ U $\, be a subset of $ V $. The intersection of all $ \mathcal{A} $-invariant subspaces of $ V $\, containing $ U $\, is $ \mathcal{A} $-invariant, and we call it the minimal $ \mathcal{A} $-invariant subspace generated by $ U $, or the minimal common invariant subspace of the operators $ A $\, in $ \mathcal{A} $\, generated by $ U $. This subspace is spanned by the set
$$ \{A_{1}\cdots A_{j}u : u \in U, (A_{1},\ldots,A_{j}) \in \mathcal{A}^{j},\, j=0,1,\ldots\}. $$
If, in addition, $ V $\, is finite dimensional, then there exists a positive integer $ k $\, such that the set
$$ \{A_{1}\cdots A_{j}u : u \in U, (A_{1},\ldots,A_{j}) \in \mathcal{A}^{j},\, j=0,1,\ldots, k \}, $$
already spans the minimal $ \mathcal{A} $-invariant subspace generated by $ U $. \par 
We define, for $ 1 \leq q < \infty $,
$$ \lVert \mathcal{A}^{n}v \rVert_{q} := \left( \sum_{(A_{1},\ldots,A_{n}) \in \mathcal{A}^{n}} \lVert A_{1}\cdots A_{n}v \rVert^{q} \right)^{1/q}. $$
and, for $ q = \infty $,
$$ \lVert \mathcal{A}^{n}v \rVert_{\infty} := \max \{ \lVert A_{1}\cdots A_{n}v \rVert : (A_{1},\ldots,A_{n}) \in \mathcal{A}^{n} \}. $$ \par 
The symbol of a sequence $ a \in l_{0}(I_{p}) $\, is the Laurent polynomial $ \tilde{a}(z) $\, given by
$$ \tilde{a}(z) := \sum_{\alpha \in I_{p}}a(\alpha)z^{\alpha}, \qquad z \in \mathbb{C}\setminus\{0\}. $$ \par 
For $ \beta \in I_{p} $, we denote by $ \tau^{\beta} $\, the shift operator on $ l_{0}(I_{p}) $\, given by
$$ \tau^{\beta}\lambda := \lambda(\cdot - \beta) \text{   for } \lambda \in l_{0}(I_{p}). $$
Let $ v $\, be an element of $ l_{0}(I_{p}) $. Then its symbol $ \tilde{v}(z) $\, is a Laurent polynomial, which induces the difference operator $ \tilde{v}(\tau) := \sum_{\beta \in I_{p}} v(\beta)\tau^{\beta} $. \par 
Let $ E $\, be a complete set of representatives of the distinct cosets of the quotient group $ I_{p}/AI_{p} $. We assume that $ E $\, contains $ 0 $. Thus, each element $ \alpha \in I_{p} $\, can be uniquely represented as $ \epsilon +A\gamma $, where $ \epsilon \in E $\, and $ \gamma \in I_{p} $. \par 
As usual, for $ 1 \leq q \leq \infty $, $ l_{q}(I_{p}) $\, denotes the Banach space of all sequences $ a $\, on $ I_{p} $\, such that $ \lVert a \rVert_{q} < \infty $, where
$$ \lVert a \rVert_{q} := \left(\sum_{\alpha \in I_{p}} \lvert a(\alpha) \rvert^{q}\right)^{1/q} \qquad \text{for } 1\leq q < \infty, $$
and $ \lVert a \rVert_{\infty} $\, is the supremum of $ a $\, on $ I_{p} $. In the following lemma, the underlying vector norm on $ l_{0}(I_{p}) $\, is chosen to be the $ l_{q} $-norm.
\begin{lemma}\label{lm3.2}
 Let $ S_{h} $\, be the subdivision operator associated with an expansive automorphism $ A $\, and a mask $ h $. Let $ \mathcal{A} := \{A_{\epsilon} : \epsilon \in E \} $, where $ A_{\epsilon} $\, are the linear operators on $ l_{0}(I_{p}) $\, given by \eqref{eq3.5}. Then for $ 1 \leq q \leq \infty $\, and $ v \in l_{0}(I_{p}) $,
 \begin{equation}\label{eq3.6}
  \lVert \tilde{v}(\tau)S_{h}^{n}\delta \rVert_{q} = \lVert \mathcal{A}^{n}v \rVert_{q}, \qquad n=1,2,\ldots .
 \end{equation}
\end{lemma}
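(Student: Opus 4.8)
The plan is to compute $\tilde v(\tau)S_h^n\delta$ pointwise via Lemma~\ref{lm3.1} and then reorganize the $l_q$-sum over $I_p$ using the unique base-$A$ expansion of indices. First I would fix $\alpha\in I_p$ and unwind the definitions of the shift and difference operators to get
\[
  \tilde v(\tau)S_h^n\delta(\alpha)=\sum_{\beta\in I_p}v(\beta)\,S_h^n\delta(\alpha-\beta).
\]
Since $E$ is a complete set of representatives of $I_p/AI_p$ containing $0$, iterating the unique decomposition $\alpha=\epsilon+A\gamma$ ($\epsilon\in E,\ \gamma\in I_p$) shows that every $\alpha\in I_p$ is represented uniquely as $\alpha=\epsilon_1+A\epsilon_2+\cdots+A^{n-1}\epsilon_n+A^n\gamma$ with $\epsilon_1,\dots,\epsilon_n\in E$ and $\gamma\in I_p$; equivalently the map $(\epsilon_1,\dots,\epsilon_n,\gamma)\mapsto\alpha$ is a bijection of $E^n\times I_p$ onto $I_p$. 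Feeding this decomposition into Lemma~\ref{lm3.1} gives $S_h^n\delta(\alpha-\beta)=A_{\epsilon_n}\cdots A_{\epsilon_1}(\gamma,\beta)$ for all $\beta\in I_p$, hence
\[
  \tilde v(\tau)S_h^n\delta(\alpha)=\sum_{\beta\in I_p}A_{\epsilon_n}\cdots A_{\epsilon_1}(\gamma,\beta)\,v(\beta)=\bigl(A_{\epsilon_n}\cdots A_{\epsilon_1}v\bigr)(\gamma).
\]

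Next I would record that $\tilde v(\tau)S_h^n\delta\in l_0(I_p)$, since $v$ and $h$ are finitely supported (so $S_h^n\delta$ and each $A_{\epsilon_n}\cdots A_{\epsilon_1}v$ lie in $l_0(I_p)$); thus all the norms below are finite and all sums are finite. For $1\le q<\infty$, I split the sum defining $\lVert\tilde v(\tau)S_h^n\delta\rVert_q^q=\sum_{\alpha\in I_p}\lvert\tilde v(\tau)S_h^n\delta(\alpha)\rvert^q$ along the bijection $\alpha\leftrightarrow(\epsilon_1,\dots,\epsilon_n,\gamma)$:
\[
  \lVert\tilde v(\tau)S_h^n\delta\rVert_q^q=\sum_{(\epsilon_1,\dots,\epsilon_n)\in E^n}\ \sum_{\gamma\in I_p}\bigl\lvert\bigl(A_{\epsilon_n}\cdots A_{\epsilon_1}v\bigr)(\gamma)\bigr\rvert^q=\sum_{(\epsilon_1,\dots,\epsilon_n)\in E^n}\lVert A_{\epsilon_n}\cdots A_{\epsilon_1}v\rVert_q^q.
\]
Because $\mathcal A=\{A_\epsilon:\epsilon\in E\}$ and reversal of $n$-tuples is a bijection of $E^n$, the right-hand side is exactly $\sum_{(A_1,\dots,A_n)\in\mathcal A^n}\lVert A_1\cdots A_nv\rVert_q^q=\lVert\mathcal A^nv\rVert_q^q$, i.e.\ \eqref{eq3.6}. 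For $q=\infty$ the same argument works verbatim with every $\sum$ replaced by $\max$ (resp.\ $\sup$ over $\gamma$).

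The genuinely substantive part is the index bookkeeping: one must check that the length-$n$ expansion in powers of $A$ really does induce a bijection $E^n\times I_p\cong I_p$ (this is just the iterated unique-representation property of $E$), and one must match summation over $E^n$ with summation over $\mathcal A^n$ in the definition of the $q$-norm joint spectral radius, which implicitly regards the operators $A_\epsilon,\ \epsilon\in E$, as an indexed family. The only place where order matters is that Lemma~\ref{lm3.1} produces the product $A_{\epsilon_n}\cdots A_{\epsilon_1}$ while $\lVert\mathcal A^nv\rVert_q$ is written with $A_1\cdots A_n$; this is harmless because summing over $E^n$ is invariant under reversing the tuple, and I expect this — together with the careful handling of the modular subtraction on $I_p$ — to be the only potential pitfall, everything else being a routine unwinding of definitions.
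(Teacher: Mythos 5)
Your proposal is correct and follows essentially the same route as the paper's proof: compute $\tilde v(\tau)S_h^n\delta(\alpha)$ pointwise via Lemma~\ref{lm3.1} using the expansion $\alpha=\epsilon_1+A\epsilon_2+\cdots+A^{n-1}\epsilon_n+A^n\gamma$ to get $(A_{\epsilon_n}\cdots A_{\epsilon_1}v)(\gamma)$, then sum (or take suprema) over the identification of $I_p$ with $E^n\times I_p$. Your extra remarks on the uniqueness of the expansion and on the harmlessness of reversing the tuple order are details the paper leaves implicit, not a different argument.
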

\begin{proof}
 Suppose $ \alpha = \epsilon_{1}+A\epsilon_{2}+\cdots+A^{n-1}\epsilon_{n}+A^{n}\gamma $, where $ \epsilon_{1},\epsilon_{2},\ldots,\epsilon_{n} \in E $\, and $ \gamma \in I_{p} $. Then by Lemma \ref{lm3.1} we have
 $$ \begin{aligned}
     \tilde{v}(\tau)S_{h}^{n}\delta(\alpha) & = \sum_{\beta \in I_{p}} v(\beta) \tau^{\beta} S_{h}^{n}(\alpha) = \sum_{\beta \in I_{p}} v(\beta) S_{h}^{n}(\alpha-\beta) \\ & = \sum_{\beta \in I_{p}} A_{\epsilon_{n}}\cdots A_{\epsilon_{1}}(\gamma,\beta) v(\beta) = A_{\epsilon_{n}}\cdots A_{\epsilon_{1}}v(\gamma)
    \end{aligned} $$
Hence, \eqref{eq3.6} is true for $ q = \infty $. When $ 1 \leq q < \infty $\, we have
$$ \sum_{\alpha \in I_{p}} \lvert \tilde{v}(\tau)S_{h}^{n}\delta(\alpha) \rvert^{q} = \sum_{(\epsilon_{1},\ldots,\epsilon_{n})\in E^{n}} \sum_{\gamma \in I_{p}} \lvert A_{\epsilon_{n}}\cdots A_{\epsilon_{1}}v(\gamma)\rvert^{q}. $$
This verifies \eqref{eq3.6} for $ 1 \leq q < \infty $.
\end{proof}
Let $ \mathcal{A} := \{A_{\epsilon} : \epsilon \in E \} $. We claim that, for each $ v \in l_{0}(I_{p}) $, the minimal $ \mathcal{A} $-invariant subspace generated by $ v $\, is finite dimensional. To establish this result, we shall introduce the concept of admissible sets. For a finite subset $ K $\, of $ I_{p} $, recall that $ l(K) $\, is the linear subspace of $ l_{0}(I_{p}) $\, consisting of all sequences supported on $ K $. Let $ A $\, be a linear operator on $ l_{0}(I_{p}) $. A finite subset $ K $\, of $ I_{p} $\, is said to be admissible for $ A $\, if $ l(K) $\, is invariant under $ A $. The
following lemma shows that there exists a finite subset $ K $\, of $ I_{p} $\, such that $ K $\, contains the support of $ v $\, and is admissible for all $ A_{\epsilon},\, \epsilon \in E $.
\begin{lemma}\label{lm3.3}
 Suppose $ A $\, is an expansive automorphism and $ h $\, is a sequence on $ I_{p} $\, with its support $ \Omega := \{\alpha \in I_{p} : h(\alpha) \neq 0 \} $\, being finite. Let $ A_{\epsilon}\, (\epsilon \in E) $\, be the linear operators on $ l_{0}(I_{p}) $\, given by \eqref{eq3.3}. Then a finite subset $ K $\, of $ I_{p} $\, is admissible for $ A_{0} $\, if and only if
\begin{equation}\label{eq3.7}
 A^{-1}(\Omega+K)\cap I_{p} \subseteq K.
\end{equation}
Consequently, for any finite subset $ G $\, of $ I_{p} $, there exists a finite subset $ K $\, of $ I_{p} $\, such that $ K $\, contains $ G $\, and $ K $\, is admissible for all $ A_{\epsilon},\, \epsilon \in E $.
\end{lemma}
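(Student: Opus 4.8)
The plan is to unwind the definition of admissibility into a purely set-theoretic containment for the first equivalence, and then to exploit the fact that $A^{-1}$ strictly contracts the $p$-adic absolute value in order to produce the required finite set.

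For the equivalence I would compute the action of $A_0$ on a sequence supported on $K$. By \eqref{eq3.5}, for $v\in l(K)$ one has $(A_0 v)(\alpha)=\sum_{\beta\in K}h(A\alpha-\beta)v(\beta)$, so $(A_0 v)(\alpha)$ can be nonzero only when $h(A\alpha-\beta)\neq 0$ for some $\beta\in K$, i.e. only when $A\alpha-\beta\in\Omega$ for some $\beta\in K$, i.e. only when $A\alpha\in\Omega+K$, i.e. only when $\alpha\in A^{-1}(\Omega+K)\cap I_p$. Hence if \eqref{eq3.7} holds then $(A_0 v)(\alpha)=0$ for every $\alpha\in I_p\setminus K$, so $l(K)$ is $A_0$-invariant. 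For the converse I would test with $v=\delta_\beta$: if $\alpha\in A^{-1}(\Omega+K)\cap I_p$, choose $\beta\in K$ and $\omega\in\Omega$ with $A\alpha=\omega+\beta$; then $(A_0\delta_\beta)(\alpha)=h(A\alpha-\beta)=h(\omega)\neq 0$, and since $\delta_\beta\in l(K)$, $A_0$-invariance forces $\alpha\in K$. This gives \eqref{eq3.7} $\Longleftrightarrow$ $K$ admissible for $A_0$.

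For the ``consequently'' part I would first record the obvious analogue of the above for a general $\epsilon$: the same computation applied to $A_\epsilon(\alpha,\beta)=h(\epsilon+A\alpha-\beta)$ shows that $K$ is admissible for $A_\epsilon$ if and only if $A^{-1}\bigl((\Omega-\epsilon)+K\bigr)\cap I_p\subseteq K$. Because $A^{-1}$ and the Minkowski sum both distribute over unions, $K$ is admissible for all $A_\epsilon$, $\epsilon\in E$, exactly when $A^{-1}(\Omega^\ast+K)\cap I_p\subseteq K$, where $\Omega^\ast:=\bigcup_{\epsilon\in E}(\Omega-\epsilon)$ is finite ($E$ and $\Omega$ being finite). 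Now I would use that $\lvert A^{-1}x\rvert_p=\lvert px\rvert_p=p^{-1}\lvert x\rvert_p$. Set $M:=\max\{\lvert x\rvert_p:x\in G\cup\Omega^\ast\}$ and $K^\ast:=\{x\in I_p:\lvert x\rvert_p\leq M\}$. Then $G\subseteq K^\ast$; moreover $K^\ast$ is finite, since the elements of $I_p$ are fractional parts and only finitely many of them have norm at most $M$ (in fact $p^m$ of them when $M=p^m$, $m\geq 0$); and for any $\alpha\in A^{-1}(\Omega^\ast+K^\ast)\cap I_p$ the ultrametric inequality gives $\lvert A\alpha\rvert_p\leq M$, hence $\lvert\alpha\rvert_p=p^{-1}\lvert A\alpha\rvert_p\leq M$, so $\alpha\in K^\ast$. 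Thus $K^\ast$ contains $G$ and is admissible for every $A_\epsilon$, $\epsilon\in E$.

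The first part is routine bookkeeping once \eqref{eq3.5} is written out; the only point needing attention is that all operations ($+$, $-$, and the actions of $A$ and $A^{-1}$) are read in $I_p\cong\mathbb{Q}_p/B_0(0)$, so that $\Omega+K$ and $\Omega^\ast+K^\ast$ stay finite and the bound $\lvert\{y\}_p\rvert_p\leq\lvert y\rvert_p$ is available for the ultrametric estimate. The step I expect to be the crux is the finiteness in the second part: one must pin down a single finite $K\supseteq G$ closed under ``one application of $A^{-1}$ preceded by translation by $\Omega^\ast$'', and it is precisely the expansiveness of $A$ — equivalently the strict contractivity of $A^{-1}$ on $\lvert\cdot\rvert_p$ — together with the finiteness of $p$-adic balls inside $I_p$ that makes this possible; without contractivity the naive closure process $K\mapsto K\cup\bigl(A^{-1}(\Omega^\ast+K)\cap I_p\bigr)$ need not terminate.
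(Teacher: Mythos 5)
Your proof is correct. The first part (the equivalence between admissibility of $K$ for $A_{0}$ and \eqref{eq3.7}) is essentially the paper's own argument: sufficiency by computing the support of $A_{0}v$ for $v\in l(K)$, necessity by testing on $\delta_{\beta}$ with $A\alpha=\omega+\beta$, and the same observation that for general $\epsilon$ the criterion becomes $A^{-1}\bigl((\Omega-\epsilon)+K\bigr)\cap I_{p}\subseteq K$, which is the paper's \eqref{eq3.8}. Where you genuinely diverge is the construction of the admissible superset of $G$: the paper sets $B:=AG\cup(\Omega-E)\cup\{0\}$ and takes $K:=\bigl(\sum_{n\geq 1}A^{-n}B\bigr)\cap I_{p}$, verifying \eqref{eq3.8} through the telescoping containment $A^{-1}(B+K)\subseteq A^{-1}B+A^{-2}B+\cdots$, and it leaves the finiteness of this $K$ implicit (it follows from the $p$-adic boundedness of the series together with the fact that bounded subsets of $I_{p}$ are finite). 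You instead take the ball $K^{\ast}=\{x\in I_{p}:\lvert x\rvert_{p}\leq M\}$ with $M$ the maximal norm over $G\cup\bigcup_{\epsilon\in E}(\Omega-\epsilon)$, and admissibility drops out in one line from the ultrametric inequality and $\lvert A^{-1}x\rvert_{p}=p^{-1}\lvert x\rvert_{p}$. Both routes exploit exactly the same mechanism, the contractivity of $A^{-1}$ coming from expansiveness of $A$; yours makes the finiteness of the admissible set completely transparent (exactly the $p^{m}$ elements of $I_{p}$ in a ball of radius $M=p^{m}$) and is arguably cleaner, while the paper's self-similar construction can produce a smaller invariant set, which is of some practical interest since this set governs the size of the finite-dimensional space on which the joint spectral radius is computed in Theorem \ref{th3.5} and Theorem \ref{th4.2}. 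Your closing remark about reading the operations modulo $B_{0}(0)$ is a fair point of care, and your estimate survives either convention since $\lvert\{y\}_{p}\rvert_{p}\leq\lvert y\rvert_{p}$; note the paper itself silently works with the Minkowski sum in $\mathbb{Q}_{p}$.
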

\begin{proof}
 Suppose $ K $\, is admissible for $ A_{0} $. Let $ \alpha \in A^{-1}(\Omega+K)\cap I_{p} $. Then we have $ A\alpha = \gamma + \beta $\, for some $ \gamma \in \Omega $\, and $ \beta \in K $. It follows that
$$ A_{0}\delta_{\beta}(\alpha) = h(A\alpha-\beta) = h(\gamma) \neq 0. $$
Since $ K $\, is admissible for $ A_{0} $, we have $ A_{0}\delta_{\beta} \in l(K) $, and therefore $ \alpha \in K $. This shows that \eqref{eq3.7} is true. \par 
Conversely, suppose \eqref{eq3.7} is true. Let $ v \in l(K) $\, and $ \alpha \in I_{p} $. Then 
$$ A_{0}v(\alpha) = \sum_{\beta \in I_{p}}h(A\alpha-\beta)v(\beta) \neq 0 $$
implies that $ A\alpha-\beta \in \Omega $\, for some $ \beta \in K $. It follows that $ A\alpha \in \Omega + K $. Therefore, $ \alpha \in A^{-1}(\Omega+K)\cap I_{p} $, and so $ \alpha \in K $\, by \eqref{eq3.7}. This shows that $ A_{0} $\, maps $ l(K) $\, to $ l(K) $. In other words, $ K $\, is admissible for $ A_{0} $. \par 
From the above proof we see that a finite subset $ K $\, of $ I_{p} $\, is admissible for $ A_{\epsilon} $\, if and only if 
\begin{equation}\label{eq3.8}
 A^{-1}(\Omega-\epsilon+K)\cap I_{p} \subseteq K.
\end{equation}
The set $ \Omega - E $\, consists of all the points $ \omega - \epsilon $, where $ \omega \in \Omega $\, and $ \epsilon \in E $. \par 
Now suppose $ G $\, is a finite subset of $ I_{p} $. Let $ B := A G \cup (\Omega - E) \cup \{0\} $, and let 
$$ K:=\left( \sum_{n=1}^{\infty} A^{-n}B \right) \cap I_{p}. $$
In other words, an element $ \alpha \in I_{p} $\, belongs to $ K $\, if and only if $ \alpha = \sum_{n=1}^{\infty} A^{-n}b_{n} $\, for some sequence of elements $ b_{n} \in B $. Since $ 0 \in B $\, and $ A^{-1}B \supseteq G $, we have 
$$ K \supseteq I_{p}\cap A^{-1}B \supseteq I_{p}\cap G = G. $$
Moreover,
$$ \begin{aligned}
    A^{-1}(\Omega-\epsilon+K )\cap I_{p} & \subseteq A^{-1}(B+K)\cap I_{p}\\ & = (A^{-1}B+A^{-1}K) \cap I_{p} \\ & \subseteq (A^{-1}B+A^{-2}B+\cdots )\cap I_{p} = K.
   \end{aligned}
 $$
Thus, $ K $\, satisfies \eqref{eq3.8}. Hence, $ K $\, is admissible for all $ A_{\epsilon},\, \epsilon \in E $.
\end{proof}
\begin{lemma}\label{lm3.4}
 Let $ \mathcal{A} $\, be a finite collection of linear operators on a vector space $ V $. Let $ v $\, be a vector in $ V$, and let $ V(v) $\, be the minimal $ \mathcal{A} $-invariant subspace generated by $ v $. If $ V(v) $\, is finite dimensional, then
\begin{equation}\label{eq3.9}
 \lim_{n \rightarrow \infty} \lVert \mathcal{A}^{n}v \rVert_{q}^{1/n} = \rho_{q}(\mathcal{A}_{\lvert V(v)}).
\end{equation}
\end{lemma}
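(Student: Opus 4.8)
The goal is to establish the two inequalities $\limsup_{n\to\infty}\lVert\mathcal{A}^n v\rVert_q^{1/n}\le\rho$ and $\liminf_{n\to\infty}\lVert\mathcal{A}^n v\rVert_q^{1/n}\ge\rho$, where $\rho:=\rho_q(\mathcal{A}_{\lvert V(v)})$ (the case $v=0$ being trivial, so assume $v\neq0$). Since $V(v)$ is $\mathcal{A}$-invariant, I would regard each operator of $\mathcal{A}$ as acting on the finite-dimensional space $V(v)$, equipped with the restriction of the vector norm; as noted before the lemma there is an integer $k\ge1$ such that the vectors $A_1\cdots A_j v$ with $(A_1,\ldots,A_j)\in\mathcal{A}^j$, $0\le j\le k$, span $V(v)$, and I fix among them a basis $w_1=D^{(1)}v,\ldots,w_d=D^{(d)}v$, where $D^{(r)}$ is a product of $j_r\le k$ operators from $\mathcal{A}$ (the empty product $I$ allowed). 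Put $L_0:=\max_{A\in\mathcal{A}}\lVert A\rVert$ and $C_2:=\max(1,L_0)(\#\mathcal{A})^{1/q}\ge1$, with the convention $(\#\mathcal{A})^{1/q}=1$ when $q=\infty$. The upper bound is immediate from $\lVert A_1\cdots A_n v\rVert\le\lVert A_1\cdots A_n\rVert\,\lVert v\rVert$ (operator norm on $V(v)$) upon summing $q$-th powers, resp.\ taking the maximum, over $\mathcal{A}^n$: this gives $\lVert\mathcal{A}^n v\rVert_q\le\lVert v\rVert\,\lVert\mathcal{A}_{\lvert V(v)}^n\rVert_q$, whence $\limsup_n\lVert\mathcal{A}^n v\rVert_q^{1/n}\le\rho$.

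For the lower bound I may assume $\rho>0$, since otherwise the upper bound already forces the limit to be $0=\rho$. The first step bounds $\lVert\mathcal{A}_{\lvert V(v)}^n\rVert_q$ by the values of $\lVert\mathcal{A}^m v\rVert_q$ over a short window. For $C\in\mathcal{A}^n$ the operator norm $\lVert C\rVert$ on $V(v)$ is attained at a unit vector $w^\ast=\sum_{r=1}^d b_r w_r$, and by equivalence of norms on $V(v)$ one has $|b_r|\le M$ for a constant $M$ depending only on the chosen basis; thus $\lVert C\rVert=\lVert Cw^\ast\rVert\le M\sum_{r=1}^d\lVert CD^{(r)}v\rVert$. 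Raising to the $q$-th power and summing over $C\in\mathcal{A}^n$ (or taking the maximum when $q=\infty$), and noting that for each $r$ the map sending an index tuple $C=(C_1,\dots,C_n)$ to the concatenated tuple $(C_1,\dots,C_n,D^{(r)})\in\mathcal{A}^{n+j_r}$ is injective and realises $CD^{(r)}v$ as a term of $\lVert\mathcal{A}^{n+j_r}v\rVert_q$, so that $\sum_{C\in\mathcal{A}^n}\lVert CD^{(r)}v\rVert^q\le\lVert\mathcal{A}^{n+j_r}v\rVert_q^q$ with $n\le n+j_r\le n+k$, I obtain a constant $C_0$ with
\begin{equation}\label{eq:lbwindow}
 \lVert\mathcal{A}_{\lvert V(v)}^n\rVert_q\le C_0\max_{n\le m\le n+k}\lVert\mathcal{A}^m v\rVert_q,\qquad n=1,2,\ldots.
\end{equation}

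The second step transfers an estimate from a later index back to $n$: factoring any product of length $m'\ge m$ through its last $m$ factors and bounding the remaining $m'-m$ factors by $\prod\lVert A_i\rVert\le L_0^{m'-m}$ (and counting at most $(\#\mathcal{A})^{m'-m}$ such products when $q<\infty$) yields $\lVert\mathcal{A}^{m'}v\rVert_q\le C_2^{\,m'-m}\lVert\mathcal{A}^m v\rVert_q$ for all $m'\ge m$. Combining the two steps with the standard fact $\rho=\inf_{m\ge1}\lVert\mathcal{A}_{\lvert V(v)}^m\rVert_q^{1/m}$, hence $\lVert\mathcal{A}_{\lvert V(v)}^n\rVert_q\ge\rho^n$: by \eqref{eq:lbwindow} there is, for each $n$, an index $m_n\in\{n,\ldots,n+k\}$ with $\lVert\mathcal{A}^{m_n}v\rVert_q\ge\rho^n/C_0$, and the transfer estimate applied with $m=n$, $m'=m_n$, using $m_n-n\le k$ and $C_2\ge1$, gives $\lVert\mathcal{A}^n v\rVert_q\ge\lVert\mathcal{A}^{m_n}v\rVert_q/C_2^{\,k}\ge\rho^n/(C_0C_2^{\,k})$. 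Therefore $\lVert\mathcal{A}^n v\rVert_q^{1/n}\ge\rho\,(C_0C_2^{\,k})^{-1/n}\to\rho$, so $\liminf_n\lVert\mathcal{A}^n v\rVert_q^{1/n}\ge\rho$, and the limit exists and equals $\rho$.

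I expect the delicate point to be precisely the interplay between the two steps of the lower bound. Inequality \eqref{eq:lbwindow} only says that, in each window $\{n,\ldots,n+k\}$, the quantity $\lVert\mathcal{A}^m v\rVert_q$ is at least of order $\rho^m$ for \emph{some} $m$, not necessarily for $m=n$; a priori $\lVert\mathcal{A}^n v\rVert_q$ could dip far below the window maximum, and it is only the transfer estimate $\lVert\mathcal{A}^{m_n}v\rVert_q\le C_2^{\,k}\lVert\mathcal{A}^n v\rVert_q$ (valid because $m_n\ge n$), together with $\rho>0$, that excludes such dips. Everything else — attainment of the operator norm on the compact unit sphere of $V(v)$, equivalence of norms to control the coefficients $b_r$, injectivity of tuple concatenation, and submultiplicativity of the operator norm — is routine.
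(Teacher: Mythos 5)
Your proof is correct and takes essentially the same route as the paper's: the upper bound via submultiplicativity, $\lVert\mathcal{A}^n v\rVert_q \le \lVert v\rVert\,\lVert\mathcal{A}^n_{\lvert V(v)}\rVert_q$, and the lower bound by comparing $\lVert\mathcal{A}^n_{\lvert V(v)}\rVert_q$ with $\lVert\mathcal{A}^n v\rVert_q$ through the finite spanning set $\{A_1\cdots A_j v:\ j\le k\}$ of $V(v)$. Your concatenation-plus-transfer (window) argument is exactly the justification of the constant $C_1$ that the paper merely asserts (namely $\lVert\mathcal{A}^n y\rVert_q\le C_1\lVert\mathcal{A}^n v\rVert_q$ for $y$ in the spanning set), so you have only made explicit what the paper's proof leaves implicit.
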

\begin{proof}
 Let $ \lVert \cdot \rVert $\, be a vector norm on $ V(v) $. Since $ V(v) $\, is finite dimensional, there exists a positive integer $ k $\, such that $ V(v) $\, is spanned by the set
$$ Y := \{A_{1} \cdots A_{j}v : (A_{1},\ldots, A_{j}) \in \mathcal{A}^{j},\, j = 0, 1,\ldots, k\}. $$
Thus, there exists a constant $ C_{1} > 0 $\, such that $ \lVert \mathcal{A}^{n}y \rVert_{q} \leq C_{1} \lVert \mathcal{A}^{n}v \rVert_{q} $\, for all $ y \in Y $\, and all $ n = 1, 2,\ldots $. Moreover, there exists a positive constant $ C_{2} $\, such that
$$ \lVert \mathcal{A}^{n}_{\lvert V(v)} \rVert_{q} \leq C_{2} \max_{y \in Y} \lVert \mathcal{A}^{n}y \rVert_{q}, \qquad n = 1, 2,\ldots .$$
Therefore, there exists a positive constant $ C $\, such that for all $ n = 1, 2, \ldots $,
$$ \lVert \mathcal{A}^{n}_{\lvert V(v)} \rVert_{q} \leq C \lVert \mathcal{A}^{n}v \rVert_{q}. $$
But $ \lVert \mathcal{A}^{n}v \rVert_{q} \leq \lVert \mathcal{A}^{n}_{\lvert V(v)} \rVert_{q} \lVert v \rVert $. This proves the desired relation \eqref{eq3.9}.
\end{proof}
\begin{theorem}\label{th3.5}
 Let $ S_{h} $\, be the subdivision operator associated with an expansive automorphism $ A $\, and a mask $ h $. Let $ \mathcal{A} := \{A_{\epsilon} : \epsilon \in E \} $, where $ E $\, is a complete set of representatives of the distinct cosets of the quotient group $ I_{p}/AI_{p} $\, and $ A_{\epsilon} $\, are the linear operators on $ l_{0}(I_{p}) $\, given by \eqref{eq3.3}. Then for $ v \in l_{0}(I_{p}) $,
\begin{equation}\label{eq3.10}
 \lim_{n \rightarrow \infty}\lVert \tilde{v}(\tau)S_{h}^{n}\delta \rVert_{q}^{1/n} = \rho_{q}(\{A_{\epsilon \lvert V(v)} : \epsilon \in E \}),
\end{equation}
where $ V(v) $\, is the minimal $ \mathcal{A} $-invariant subspace generated by $ v $. Moreover, if $ W $\, is the minimal $ \mathcal{A} $-invariant subspace generated by a finite set $ Y $, then
\begin{equation}\label{eq3.11}
 \rho_{q}(\{A_{\epsilon \lvert W} : \epsilon \in E \}) = \max_{v \in Y}\{\lim_{n \rightarrow \infty}\lVert \tilde{v}(\tau)S_{h}^{n}\delta \rVert_{q}^{1/n}\}.
\end{equation}
\end{theorem}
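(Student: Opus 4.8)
The plan is to obtain \eqref{eq3.10} by chaining the three preceding lemmas, and then to deduce \eqref{eq3.11} from \eqref{eq3.10} by a short finite‑dimensional argument. For \eqref{eq3.10}, fix $v\in l_{0}(I_{p})$ and set $G:=\mathrm{supp}(v)$. By Lemma \ref{lm3.3} there is a finite set $K\supseteq G$ that is admissible for every $A_{\epsilon}$, $\epsilon\in E$; hence $l(K)$ is a finite‑dimensional $\mathcal{A}$-invariant subspace of $l_{0}(I_{p})$ containing $v$, so the minimal invariant subspace $V(v)\subseteq l(K)$ is finite dimensional. Lemma \ref{lm3.4} then gives $\lim_{n\to\infty}\lVert\mathcal{A}^{n}v\rVert_{q}^{1/n}=\rho_{q}(\mathcal{A}_{\lvert V(v)})=\rho_{q}(\{A_{\epsilon\lvert V(v)}:\epsilon\in E\})$, while Lemma \ref{lm3.2} gives $\lVert\tilde v(\tau)S_{h}^{n}\delta\rVert_{q}=\lVert\mathcal{A}^{n}v\rVert_{q}$ for all $n$. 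Combining these two facts yields \eqref{eq3.10}.

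For \eqref{eq3.11}, write $Y=\{v_{1},\ldots,v_{m}\}$ and recall that $W=V(v_{1})+\cdots+V(v_{m})$, since the minimal $\mathcal{A}$-invariant subspace generated by $Y$ is spanned by $\bigcup_{v\in Y}\{A_{1}\cdots A_{j}v:(A_{1},\ldots,A_{j})\in\mathcal{A}^{j},\,j\ge0\}$; as above $W$ is finite dimensional. By \eqref{eq3.10} the right‑hand side of \eqref{eq3.11} equals $\max_{1\le i\le m}\rho_{q}(\{A_{\epsilon\lvert V(v_{i})}:\epsilon\in E\})$, so it suffices to prove $\rho_{q}(\mathcal{A}_{\lvert W})=\max_{i}\rho_{q}(\mathcal{A}_{\lvert V(v_{i})})$. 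Fix one vector norm on $W$ and use its restriction on each $V(v_{i})$; since $\rho_{q}$ is independent of the chosen norm, this causes no loss. The inequality ``$\ge$'' is the easy direction: for a product $T=A_{1}\cdots A_{n}$ with $A_{j}\in\mathcal{A}$ one has $\lVert T_{\lvert V(v_{i})}\rVert\le\lVert T_{\lvert W}\rVert$, because the supremum defining the former is over a smaller set; summing the $q$-th powers over $\mathcal{A}^{n}$ and taking $n$-th roots gives $\rho_{q}(\mathcal{A}_{\lvert V(v_{i})})\le\rho_{q}(\mathcal{A}_{\lvert W})$.

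For the reverse inequality, choose a basis $b_{1},\ldots,b_{r}$ of $W$ with each $b_{l}$ lying in some $V(v_{i(l)})$ — possible because the subspaces $V(v_{i})$ together span $W$. By equivalence of norms on the finite‑dimensional space $W$, every $w\in W$ with $\lVert w\rVert\le1$ can be written $w=\sum_{l}c_{l}b_{l}$ with $|c_{l}|\le C$ for a fixed constant $C$, so $\lVert(A_{1}\cdots A_{n})w\rVert\le C\sum_{l}\lVert(A_{1}\cdots A_{n})b_{l}\rVert$ and hence $\lVert(A_{1}\cdots A_{n})_{\lvert W}\rVert\le C\sum_{l}\lVert(A_{1}\cdots A_{n})b_{l}\rVert$. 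Taking the $\ell_{q}$-norm over $(A_{1},\ldots,A_{n})\in\mathcal{A}^{n}$ and applying Minkowski's inequality to interchange that norm with the finite sum over $l$ gives $\lVert\mathcal{A}^{n}_{\lvert W}\rVert_{q}\le C\sum_{l}\lVert\mathcal{A}^{n}b_{l}\rVert_{q}\le C\sum_{l}\lVert\mathcal{A}^{n}_{\lvert V(v_{i(l)})}\rVert_{q}\lVert b_{l}\rVert\le C'r\max_{i}\lVert\mathcal{A}^{n}_{\lvert V(v_{i})}\rVert_{q}$. Taking $n$-th roots and letting $n\to\infty$ yields $\rho_{q}(\mathcal{A}_{\lvert W})\le\max_{i}\rho_{q}(\mathcal{A}_{\lvert V(v_{i})})$, completing the proof of \eqref{eq3.11}.

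The main obstacle is this last step: passing the triangle‑inequality estimate through the $q$-norm aggregation over the exponentially many operator products in $\mathcal{A}^{n}$. Minkowski's inequality is exactly what lets one trade a finite sum of $q$-norms for the $q$-norm of a sum while only picking up the fixed constants $C$, $C'$, $r$, which disappear upon taking $n$-th roots. The only other point requiring care is the standard one that $\rho_{q}$ is norm‑independent, which legitimizes comparing all the restricted operators through a single ambient norm on $W$.
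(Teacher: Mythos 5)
Your proposal is correct and follows essentially the same route as the paper: Lemma \ref{lm3.3} to get finite dimensionality of $V(v)$, Lemma \ref{lm3.2} to identify $\lVert\tilde v(\tau)S_{h}^{n}\delta\rVert_{q}$ with $\lVert\mathcal{A}^{n}v\rVert_{q}$, Lemma \ref{lm3.4} for \eqref{eq3.10}, and the identity $\rho_{q}(\mathcal{A}_{\lvert W})=\max_{v\in Y}\rho_{q}(\mathcal{A}_{\lvert V(v)})$ for \eqref{eq3.11}. The only difference is that the paper merely asserts this last identity from the fact that $W$ is a finite sum of the $V(v)$, whereas you supply its proof (restriction for one inequality, a basis drawn from the $V(v_{i})$ plus Minkowski for the other), which is a welcome but not structurally different addition.
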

\begin{proof}
 By Lemma \ref{lm3.3}, $ V(v) $\, is finite dimensional, and so the relevant joint spectral radius in \eqref{eq3.10} is well defined. By Lemma \ref{lm3.2} we have,
$$ \lVert \tilde{v}(\tau)S_{h}^{n}\delta \rVert_{q} = \lVert \mathcal{A}^{n}v \rVert_{q}, \qquad 1 \leq q \leq \infty,\, n=1,2,\ldots . $$
Applying Lemma \ref{lm3.4} to the present situation, we obtain \eqref{eq3.10}. \par 
For the second part of the theorem, let $ W $\, be the minimal $ \mathcal{A} $-invariant subspace generated by a finite set $ Y $, and observe that $ W $\, is a finite sum of the linear subspaces $ V(v),\, v \in Y $. Hence
$$ \rho_{q}(\{A_{\epsilon \lvert W} : \epsilon \in E \}) = \max_{v \in Y}\{ \rho_{q}(\{A_{\epsilon \lvert V(v)} : \epsilon \in E \})\} $$
This, together with \eqref{eq3.10}, verifies \eqref{eq3.11}.
\end{proof}

\section{Convergence of subdivision schemes}

In this section, we characterize the $ L_{q} $-convergence $ (1 \leq q \leq \infty) $\, of a subdivision scheme in terms of the corresponding refinement mask. \par 
In our study of convergence, the concept of stability plays an important role. The shifts of a function $ \phi $\, in $ L_{q}(\mathbb{Q}_{p}) $\, are said to be stable if there are two positive constants $ C_{1} $\, and $ C_{2} $\, such that
\begin{equation}\label{eq4.1}
 C_{1} \lVert \lambda \rVert_{q} \leq \left \lVert \sum_{\alpha \in I_{p}} \lambda(\alpha)\phi(\cdot -\alpha) \right \rVert_{q} \leq C_{2}\lVert \lambda \rVert_{q}, \qquad \forall \lambda \in l_{0}(I_{p}).
\end{equation}
\begin{remark}
If $ \phi $\, is orthonormal then the shifts of $ \phi $\, are stable.
\end{remark}
\par 
For a function $ \phi $\, and sequence $ a$, by $ supp(\phi) $\, we denote the support of $ \phi $, and by $ supp(h) $\, we denote the set $ \{\alpha \in I_{p} : a(\alpha) \neq 0 \} $. \par
First, we give a necessary condition for the subdivision scheme to converge.
\begin{theorem}\label{th4.1}
 Let $ A $\, be an expansive automorphism with $ \lvert A \rvert = p $, $ h $\, an element in $ l_{0}(I_{p}) $\, with $ \sum_{\alpha \in I_{p}} h(\alpha) = p $, and $ S_{h} $\, the subdivision operator associated with $ A $\, and $ h $. If the subdivision scheme associated with $ A $\, and $ h $\, converges to a compactly supported orthogonal function in the $ L_{q} $-norm $ (1 \leq q \leq \infty) $, then for any vector $ y \in supp(h) $,
\begin{equation}\label{eq4.2}
 \lim_{n \rightarrow \infty} p^{-n/q}\lVert \nabla_{y}S_{h}^{n}\delta \rVert_{q} = 0.
\end{equation}
Consequently, if the subdivision scheme associated with $ h $\, converges to a compactly supported orthogonal function in the $ L_{q} $-norm, then
\begin{equation}\label{eq4.3}
 \sum_{\beta \in I_{p}}h(\alpha-A\beta) = 1, \qquad \forall \alpha \in I_{p}.
\end{equation}
\end{theorem}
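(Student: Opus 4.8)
\emph{Overall strategy.} The plan is to transport the $\ell_q$‑quantities in \eqref{eq4.2} into $L_q(\mathbb Q_p)$ by running the cascade from a concrete starting function, and then to read off \eqref{eq4.2} from the hypothesised $L_q$‑convergence together with the expansiveness of $A$, and \eqref{eq4.3} as a sum rule extracted from the refinement equation. Concretely, I would take $\phi_0:=\mathbf 1_{B_0(0)}$: its $I_p$‑translates have pairwise disjoint supports of measure $1$, hence are orthonormal and $L_q$‑stable with constant $1$, so $\phi_0$ is an admissible initial function and by hypothesis $Q_h^n\phi_0\to\phi$ in $L_q$, where $\phi$ is the compactly supported orthogonal limit; a support estimate for $Q_h$ gives $\mathrm{supp}(Q_h^n\phi_0)\subseteq B_N(0)$ for all $n$, hence $\mathrm{supp}\,\phi\subseteq B_N(0)$.

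\emph{Proof of \eqref{eq4.2}.} By \eqref{eq3.2}, $Q_h^n\phi_0=\sum_{\beta\in I_p}S_h^n\delta(\beta)\,\mathbf 1_{B_{-n}(p^n\beta)}$, a sum over the pairwise disjoint balls $B_{-n}(p^n\beta)$ of measure $p^{-n}$; consequently $\bigl\|\sum_\beta\lambda(\beta)\mathbf 1_{B_{-n}(p^n\beta)}\bigr\|_q=p^{-n/q}\|\lambda\|_q$ for all $\lambda\in l_0(I_p)$. Applying this with $\lambda=\nabla_yS_h^n\delta$ and re‑indexing $\beta\mapsto\beta-y$ in the shifted part of $\nabla_y$, I want the identity
\[
p^{-n/q}\|\nabla_yS_h^n\delta\|_q=\bigl\|Q_h^n\phi_0-(Q_h^n\phi_0)(\cdot-p^ny)\bigr\|_q .
\]
The one point needing care is that re‑indexing replaces the centre $p^n\beta$ by $p^n(\gamma+y)$, where ``$+$'' is addition in $I_p$; but the discrepancy between this and addition in $\mathbb Q_p$ is a carry lying in $B_0(0)$, so after multiplication by $p^n$ it lies in $B_{-n}(0)$ and leaves the ball $B_{-n}(\cdot)$ unchanged. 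Granting the identity, \eqref{eq4.2} follows immediately: $p^ny=A^{-n}y\to0$ since $A$ is expansive, translation is continuous on $L_q$ for $q<\infty$ (and for $q=\infty$ the limit $\phi$, being a uniform limit of continuous compactly supported functions, is uniformly continuous), so $\|\phi-\phi(\cdot-p^ny)\|_q\to0$; and the right‑hand side is at most $2\|Q_h^n\phi_0-\phi\|_q+\|\phi-\phi(\cdot-p^ny)\|_q\to0$. This in fact proves \eqref{eq4.2} for every $y\in I_p$.

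\emph{Deduction of \eqref{eq4.3}.} From $\int_{\mathbb Q_p}Q_h^n\phi_0=p^{-n}\sum_\beta S_h^n\delta(\beta)=p^{-n}\cdot p^n=1$ (using $\sum_\beta S_h^n\delta(\beta)=p^n$, a consequence of \eqref{eq1.7}) and $L_1(B_N(0))$‑convergence, one gets $\widehat\phi(0)=\int\phi=1$. Next I would show that the compactly supported orthogonal refinable function $\phi$ lies in $\mathscr D_N^0$, i.e. $\mathrm{supp}\,\widehat\phi\subseteq B_0(0)$: orthonormality of $\{\phi(\cdot-\alpha)\}_{\alpha\in I_p}$ gives, by Parseval, $\widehat{|\widehat\phi|^2}(\alpha-\beta)=\delta_{\alpha\beta}$ on $I_p$, and combining this with the local constancy of $\widehat\phi$ (hence of $|\widehat\phi|^2$) forces $|\widehat\phi|^2\equiv1$ on $B_0(0)$ and $\equiv0$ off it. Then Theorem \ref{th1.2} applies and yields $H(k)=0$ for all $k\in\{1,\dots,p^{N+1}-1\}$ with $p\nmid k$, while $H(0)=\tfrac1p\sum_k h(k/p^{N+1})=1$. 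Inverting the finite Fourier transform \eqref{eq1.9} to write each $h(k/p^{N+1})$ through the values $H(l)$ and summing over the $p^N$ indices $k$ in a fixed coset $\epsilon+AI_p$, the contributions with $p\nmid l$ drop out because $H(l)=0$ and those with $p\mid l\ne0$ drop out because the corresponding geometric sum vanishes, leaving $\sum_{\beta\in I_p}h(\alpha-A\beta)=H(0)=1$ for every $\alpha$, which is \eqref{eq4.3}. (Alternatively, for $1<q\le\infty$ one can extract \eqref{eq4.3} from \eqref{eq4.2} directly: the slice decomposition $\|S_h^{n+1}\delta\|_q^q=\sum_{\epsilon\in E}\|h_\epsilon*S_h^n\delta\|_q^q$ with $h_\epsilon(\cdot):=h(\epsilon+A\cdot)$, combined with the ``flatness'' supplied by \eqref{eq4.2} and with $\|S_h^{n+1}\delta\|_q\sim p^{1/q}\|S_h^n\delta\|_q$ coming from convergence to a nonzero limit, gives $\sum_{\epsilon\in E}\bigl|\sum_\gamma h(\epsilon+A\gamma)\bigr|^q=p$, and together with $\sum_\epsilon\sum_\gamma h(\epsilon+A\gamma)=p$ and $\#E=p$ this pins every coset‑sum to $1$.)

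\emph{Main obstacle.} The genuinely delicate part throughout is the bookkeeping caused by $I_p$ not being closed under the arithmetic of $\mathbb Q_p$: each ``shift/re‑index'' step must be justified by the remark that the mismatch between addition in $I_p$ and in $\mathbb Q_p$ is a carry in $B_0(0)$ — harmless at the resolution $p^{-n}$ of the balls in the proof of \eqref{eq4.2}, and reappearing in the proof of \eqref{eq4.3} as the need to establish $\phi\in\mathscr D_N^0$ (i.e. $\mathrm{supp}\,\widehat\phi\subseteq B_0(0)$) before the sharp mask conditions of Theorem \ref{th1.2} can be invoked.
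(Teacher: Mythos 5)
Your proof of \eqref{eq4.2} is correct and is essentially the paper's argument: the paper runs the cascade from a compactly supported orthogonal initial function and uses stability of its shifts to get $p^{-n/q}\lVert\nabla_y S_h^n\delta\rVert_q\le C\lVert f_n-f_n(\cdot-A^{-n}y)\rVert_q$, while you specialize to $\phi_0=1_{B_0(0)}$, for which stability is an exact isometry; your handling of the $I_p$-versus-$\mathbb{Q}_p$ carry (absorbed into the ball $B_{-n}(\cdot)$) is right, and your remark that \eqref{eq4.2} in fact holds for every $y\in I_p$ is the version actually needed later.

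The deduction of \eqref{eq4.3}, however, has a genuine gap. The pivotal claim that orthonormality of $\{\phi(\cdot-\alpha)\}_{\alpha\in I_p}$ together with local constancy of $\widehat\phi$ forces $\lvert\widehat\phi\rvert^2\equiv 1$ on $B_0(0)$ and $\equiv 0$ off it is false as stated: $\phi=p^{1/2}1_{B_{-1}(0)}$ is compactly supported with orthonormal $I_p$-translates, yet $\widehat\phi=p^{-1/2}1_{B_1(0)}$, so $\mathrm{supp}\,\widehat\phi\not\subseteq B_0(0)$. Consequently you are not entitled to invoke Theorem \ref{th1.2}, whose relevant (converse) half \emph{assumes} $\mathrm{supp}\,\widehat\phi\subset B_0(0)$; upgrading your claim to the refinable limit function would amount to a nontrivial rigidity statement about $p$-adic refinable functions, which the proposal does not supply (and the Parseval identity you quote, with $\alpha-\beta\notin I_p$ in general, is itself delicate because $I_p$ is not a group). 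The parenthetical alternative for $1<q\le\infty$ is only a sketch: the key identity $\sum_{\epsilon\in E}\bigl\lvert\sum_{\gamma}h(\epsilon+A\gamma)\bigr\rvert^q=p$ is asserted, not derived. The paper's own route is far more elementary and uses only what you have already proved: since the iterates are supported in a fixed compact set, $L_q$-convergence implies $L_1$-convergence, so \eqref{eq4.2} holds with $q=1$ and for every $y\in I_p$; then, with $a_n=S_h^n\delta$, one checks $\sum_{\alpha}a_n(\alpha)=p^n$ and $\sum_{\beta}a_n(\alpha-A\beta)=p^{n-1}\sum_{\beta}h(\alpha-A\beta)$, which give $\bigl\lvert\sum_{\beta}[h(\alpha-A\beta)-h(-A\beta)]\bigr\rvert\le p^{-(n-1)}\lVert\nabla_{\alpha}a_n\rVert_1\to 0$; hence all $p$ coset sums are equal, and since they add up to $\sum_{\alpha}h(\alpha)=p$, each equals $1$. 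Replacing your Fourier-analytic step by this moment argument closes the gap.
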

\begin{proof}
 Suppose $ \phi $\, is a compactly supported orthogonal function in $ L_{q}(\mathbb{Q}_{p}) $. Then the shifts of $ \phi $\, are stable. For $ n = 0, 1, 2, \ldots $, let $ a_{n} := S_{h}^{n}\delta $\, and $ f_{n} := Q_{h}^{n}\phi $, where $ Q_{h} $\, is the operator given in \eqref{eq1.13}. Then by \eqref{eq3.1} we have,
$$ f_{n} = \sum_{\alpha \in I_{p}} a_{n}(\alpha) \phi(A^{n}\cdot -\alpha). $$
Hence, for $ y \in supp(h) $\, we have
$$ \begin{aligned}
    f_{n}-f_{n}(\cdot-A^{-n}y) & = \sum_{\alpha \in I_{p}} a_{n}(\alpha) \phi(A^{n}\cdot -\alpha) - \sum_{\alpha \in I_{p}} a_{n}(\alpha) \phi(A^{n}\cdot -y -\alpha) \\ & = \sum_{\alpha \in I_{p}} a_{n}(\alpha) \phi(A^{n}\cdot -\alpha)- \sum_{\alpha \in I_{p}} a_{n}(\alpha-y) \phi(A^{n}\cdot -\alpha) \\ & = \sum_{\alpha \in I_{p}} [a_{n}(\alpha)-a_{n}(\alpha-y)] \phi(A^{n}\cdot -\alpha) \\
    & = \sum_{\alpha \in I_{p}} \nabla_{y}a_{n}(\alpha) \phi(A^{n}\cdot -\alpha).
   \end{aligned}
$$
Since the shifts of $ \phi $\, are stable, there exists a constant $ C > 0 $\, such that
\begin{equation}\label{eq4.4}
 p^{-n/q}\lVert \nabla_{y}a_{n} \rVert_{q} \leq C \lVert f_{n}-f_{n}(\cdot-A^{-n}y) \rVert_{q}.
\end{equation}
If the subdivision scheme converges in the $ L_{q} $-norm, then there exists a compactly supported orthogonal function $ f $\, in $ L_{q}(\mathbb{Q}_{p})$\, ($ f \in C(\mathbb{Q}_{p})$\, in the case $ q = \infty$) such that $ \lVert f_{n} - f \rVert_{q} \rightarrow 0 $\, as $ n \rightarrow \infty $. Moreover, by the triangle inequality, we have
$$ \lVert f_{n}-f_{n}(\cdot-A^{-n}y) \rVert_{q} \leq \lVert f-f(\cdot-A^{-n}y)\rVert_{q}+2\lVert f-f_{n} \rVert_{q}. $$
Hence, $ \lVert f_{n}-f_{n}(\cdot-A^{-n}y) \rVert_{q} \rightarrow 0 $\, as $ n \rightarrow \infty $. This, together with \eqref{eq4.4}, verifies \eqref{eq4.2}. \par 
For the second part of the theorem, we observe that if the subdivision scheme converges in the $ L_{q} $-norm for some $ q $\, with $ 1 \leq q \leq \infty $, then it also converges in the $ L_{1} $-norm. \par 
Let $ E $\, be a complete set of representatives of the distinct cosets of $ I_{p}/AI_{p} $. Then $ \#E = p $, and $ I_{p} $\, is the disjoint union of $ \alpha + AI_{p},\, \alpha \in E $. Since $ \sum_{\alpha \in I_{p}} h(\alpha) = p $, we have
$$ \sum_{\alpha \in E} \sum_{\beta \in I_{p}} h(\alpha-A\beta) = p. $$
Thus, \eqref{eq4.3} will be proved if we can show that
\begin{equation}\label{eq4.5}
 \sum_{\beta \in I_{p}} h(\alpha-A\beta)= \sum_{\beta \in I_{p}} h(-A\beta) , \quad \forall \alpha \in E.
\end{equation}
To this end, we deduce from $ a_{n} = S_{h}a_{n-1} $\, that
$$ \sum_{\alpha \in I_{p}} a_{n}(\alpha)= \sum_{\alpha \in I_{p}}\sum_{\beta \in I_{p}} h(\alpha-A\beta)a_{n-1}(\beta) = p \sum_{\beta \in I_{p}} a_{n-1}(\beta). $$
An induction argument gives $ \sum_{\alpha \in I_{p}} a_{n}(\alpha) = p^{n} $. Moreover,
$$ \begin{aligned}
    \sum_{\beta \in I_{p}} a_{n}(\alpha-A\beta) & = \sum_{\beta \in I_{p}}\sum_{\gamma \in I_{p}} h(\alpha-A\beta-A\gamma)a_{n-1}(\gamma) \\ & = \sum_{\beta \in I_{p}} h(\alpha-A\beta) \sum_{\gamma \in I_{p}} a_{n-1}(\gamma-\beta) \\ & = p^{n-1} \sum_{\beta \in I_{p}} h(\alpha-A\beta). 
   \end{aligned}
 $$
Thus we have
$$ \sum_{\beta \in I_{p}} [h(\alpha-A\beta) - h(-A\beta)] = p^{-(n-1)} \sum_{\beta \in I_{p}} [a_{n}(\alpha-A\beta) - a_{n}(-A\beta)] $$
It follows that 
\begin{equation}\label{eq4.6}
 \left \lvert \sum_{\beta \in I_{p}} [h(\alpha-A\beta) - h(-A\beta)] \right \rvert \leq p^{-(n-1)}\lVert \nabla_{\alpha}a_{n} \rVert_{1}.
\end{equation}
If the subdivision scheme is $ L_{1} $-convergent, then by the first part of the theorem we have $ p^{-(n-1)}\lVert \nabla_{\alpha}a_{n} \rVert_{1} \rightarrow 0 $\, as $ n \rightarrow \infty $. This, together with \eqref{eq4.6}, implies \eqref{eq4.5}, as desired.
\end{proof}
\begin{corollary}
     Let $ T $\, be an expansive automorphism with $ \lvert T \rvert = p $, $ h $\, an element in $ l_{0}(I_{p}) $\, with $ \sum_{\alpha \in I_{p}} h(\alpha) = p $, and $ S_{h} $\, the subdivision operator associated with $ A $\, and $ h $. If the subdivision scheme associated with $ T $\, and $ h $\, converges to a compactly supported function with stable shifts in the $ L_{q} $-norm $ (1 \leq q \leq \infty) $, then for any vector $ y \in supp(h) $, Eq. \eqref{eq4.2} holds.
\end{corollary}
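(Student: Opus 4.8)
The plan is to observe that the proof of the first assertion of Theorem \ref{th4.1} never genuinely used orthogonality of the limit function: orthogonality entered only through the Remark that an orthonormal family has stable shifts, and it was precisely this stability that powered the estimate \eqref{eq4.4}. Since the hypothesis of the corollary supplies stability of the shifts directly, the same chain of inequalities goes through verbatim, with $A$ replaced by the given expansive automorphism $T$ (and $Q_h$, \eqref{eq3.2} read with $T$ in place of $A$).

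Concretely, I would let $\phi$ denote the compactly supported function with stable shifts to which the subdivision scheme associated with $T$ and $h$ converges, set $a_n := S_h^n\delta$ and $f_n := Q_h^n\phi$, and use the $T$-analogue of \eqref{eq3.2} to write $f_n = \sum_{\alpha \in I_p} a_n(\alpha)\phi(T^n\cdot - \alpha)$. For a fixed $y \in supp(h)$, the same reindexing of the sum as in the proof of Theorem \ref{th4.1} gives
$$ f_n - f_n(\cdot - T^{-n}y) = \sum_{\alpha \in I_p} \nabla_{y} a_n(\alpha)\,\phi(T^{n}\cdot - \alpha). $$
Applying the dilation change of variables $u = T^n x$, which multiplies the $L_q$-norm by $p^{-n/q}$ because $\lvert T \rvert = p$, together with the lower stability bound in \eqref{eq4.1}, produces a constant $C > 0$ with $p^{-n/q}\lVert \nabla_{y} a_n \rVert_q \le C\lVert f_n - f_n(\cdot - T^{-n}y)\rVert_q$, exactly as in \eqref{eq4.4}.

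It then remains to show the right-hand side tends to $0$. Writing $f$ for the $L_q$-limit of the $f_n$ (continuous, in the case $q = \infty$), the triangle inequality gives $\lVert f_n - f_n(\cdot - T^{-n}y)\rVert_q \le \lVert f - f(\cdot - T^{-n}y)\rVert_q + 2\lVert f - f_n\rVert_q$; the second term vanishes in the limit by hypothesis, and the first vanishes because $T^{-n}y \to 0$ in $\mathbb{Q}_p$ while translation is continuous on $L_q(\mathbb{Q}_p)$ for $1 \le q < \infty$ (and $f$ is uniformly continuous with compact support when $q = \infty$). Combining this with the displayed inequality yields \eqref{eq4.2}.

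I do not expect a genuine obstacle here: the corollary is essentially a relaxation of the hypotheses of Theorem \ref{th4.1}, and the only points that need care are bookkeeping the $p^{-n/q}$ factor from the dilation substitution and noting explicitly that orthogonality was inessential — stability of shifts is all the first part of that proof ever used, and it is now assumed outright. One may also remark that the second conclusion \eqref{eq4.3} of Theorem \ref{th4.1} is not being claimed in the corollary, so the supplementary argument specializing to $q = 1$ is not needed.
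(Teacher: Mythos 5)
Your proposal is correct and matches the paper's intent exactly: the corollary is left without a separate proof precisely because the argument for \eqref{eq4.2} in Theorem \ref{th4.1} uses orthogonality only to obtain stability of shifts, and you rerun that same chain (the expression for $f_n - f_n(\cdot - T^{-n}y)$, the dilation factor $p^{-n/q}$, the lower stability bound giving \eqref{eq4.4}, and the triangle inequality with continuity of translation) with stability assumed outright. No gaps; your added remark that \eqref{eq4.3} is not claimed is also consistent with the paper.
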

\begin{remark}
    If $ \phi $\, is a refinable function that generates a \MRA, then $ \phi $\, has stable translates.
\end{remark}
\par 
The next theorem gives a characterization of convergence of the subdivision
scheme.
\begin{theorem}\label{th4.2}
 Let $ A $\, be an expansive automorphism with $ \lvert A \rvert = p $, $ h $\, an element in $ l_{0}(I_{p}) $\, such that $ \sum_{\alpha \in I_{p}} h(\alpha) = p $, and $ S_{h} $\, the corresponding subdivision operator. Then the subdivision scheme associated with $ A $\, and $ h $\, converges to a compactly supported orthogonal function in the $ L_{q} $-norm $ (1 \leq q \leq \infty) $\, if and only if
\begin{equation}\label{eq4.7}
 \lim_{n \rightarrow \infty} \lVert \nabla_{\gamma}S_{h}^{n}\delta \rVert_{q} < p^{1/q}, \qquad \text{for } \gamma \in supp(h)\setminus \{0\}.
\end{equation}
\end{theorem}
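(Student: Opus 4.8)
First I would fix $\mathcal{A}:=\{A_\epsilon:\epsilon\in E\}$ and let $V$ be the minimal $\mathcal{A}$-invariant subspace of $l_0(I_p)$ generated by $\{\nabla_\gamma\delta:\gamma\in supp(h)\setminus\{0\}\}$; by Lemma~\ref{lm3.3} (with $G=supp(h)\cup\{0\}$) $V$ is finite dimensional. Since $\nabla_\gamma\delta$ has symbol $1-z^\gamma$, whose induced difference operator is $\nabla_\gamma$, Theorem~\ref{th3.5} (formula \eqref{eq3.11}) gives
\[
\rho_q\big(\{A_{\epsilon\lvert V}:\epsilon\in E\}\big)=\max_{\gamma\in supp(h)\setminus\{0\}}\lim_{n\to\infty}\lVert\nabla_\gamma S_h^n\delta\rVert_q^{1/n},
\]
so \eqref{eq4.7} is exactly the condition $\rho_q(\{A_{\epsilon\lvert V}\})<p^{1/q}$, and I would prove the theorem in this form.

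\textbf{Necessity.} Assume the scheme converges in $L_q$. By Theorem~\ref{th4.1}, $p^{-n/q}\lVert\nabla_\gamma S_h^n\delta\rVert_q\to0$ for all $\gamma\in supp(h)$. Using Lemma~\ref{lm3.2} ($\lVert\nabla_\gamma S_h^n\delta\rVert_q=\lVert\mathcal{A}^n(\nabla_\gamma\delta)\rVert_q$) and, as in the proof of Lemma~\ref{lm3.4} with $V$ written as the sum of the $V(\nabla_\gamma\delta)$, the bound $\lVert\mathcal{A}^n_{\lvert V}\rVert_q\le C\sum_\gamma\lVert\mathcal{A}^n(\nabla_\gamma\delta)\rVert_q$, I get $p^{-n/q}\lVert\mathcal{A}^n_{\lvert V}\rVert_q\to0$, hence $\lVert\mathcal{A}^{n_0}_{\lvert V}\rVert_q<p^{n_0/q}$ for some $n_0$. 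Submultiplicativity of $n\mapsto\lVert\mathcal{A}^n_{\lvert V}\rVert_q$ then yields $\rho_q(\{A_{\epsilon\lvert V}\})=\inf_n\lVert\mathcal{A}^n_{\lvert V}\rVert_q^{1/n}\le\lVert\mathcal{A}^{n_0}_{\lvert V}\rVert_q^{1/n_0}<p^{1/q}$, i.e.\ \eqref{eq4.7}.

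\textbf{Sufficiency.} Assume $\theta:=\rho_q(\{A_{\epsilon\lvert V}\})<p^{1/q}$, so that $\lVert\nabla_\gamma S_h^n\delta\rVert_q\le C_\eta(\theta+\eta)^n$ for $\gamma\in supp(h)\setminus\{0\}$ and small $\eta>0$. I would first deduce the sum rule \eqref{eq4.3}: since $supp(S_h^n\delta)\subseteq\sum_{j=0}^{n-1}A^j supp(h)$ sits in a single ball of radius $O(p^n)$ it has $O(p^n)$ points, so $\lVert\nabla_\alpha a_n\rVert_1\le O(p^{n(1-1/q)})\lVert\nabla_\alpha a_n\rVert_q=O\big((p^{1-1/q}(\theta+\eta))^n\big)$, and as $p^{1-1/q}(\theta+\eta)<p$ the right side of \eqref{eq4.6} tends to $0$; together with $\sum_\alpha h(\alpha)=p$ this gives \eqref{eq4.3}. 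Next I would run the cascade from $\phi_0:=\mathbf{1}_{B_0(0)}$, which has orthonormal shifts and satisfies $\phi_0=\sum_{\epsilon\in E}\phi_0(A\cdot-\epsilon)$ (mask $h_E=\mathbf{1}_E$). With $f_n:=Q_h^n\phi_0$ and $a_n:=S_h^n\delta$, \eqref{eq3.2} gives $f_n=\sum_\alpha a_n(\alpha)\phi_0(A^n\cdot-\alpha)$; expanding each $\phi_0(A^n\cdot-\alpha)$ by its refinement relation rewrites $f_n=\sum_\gamma(S_{h_E}a_n)(\gamma)\phi_0(A^{n+1}\cdot-\gamma)$, while $f_{n+1}=\sum_\gamma a_{n+1}(\gamma)\phi_0(A^{n+1}\cdot-\gamma)$. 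As the $\phi_0(A^{n+1}\cdot-\gamma)$ have disjoint supports of measure $p^{-(n+1)}$,
\[
\lVert f_{n+1}-f_n\rVert_q=p^{-(n+1)/q}\lVert a_{n+1}-S_{h_E}a_n\rVert_q=p^{-(n+1)/q}\lVert S_g a_n\rVert_q,\qquad g:=h-h_E .
\]
Regrouping $S_g a_n$ over cosets $\gamma=\epsilon+A\delta$ gives $(S_g a_n)(\epsilon+A\delta)=(w_\epsilon*a_n)(\delta)$ with $w_\epsilon(y)=g(\epsilon+Ay)$; the sum rule makes $\sum_y w_\epsilon(y)=0$, so $w_\epsilon=-\sum_y w_\epsilon(y)\nabla_y\delta$ and, by disjointness of cosets, $\lVert S_g a_n\rVert_q\le C\max_{y\in D}\lVert\nabla_y S_h^n\delta\rVert_q$ for a fixed finite set $D$ of directions $A^{-1}(\mu-\epsilon)$, $\mu\in supp(h)$, $\epsilon\in E$. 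Since $supp(h)\subseteq\{k/p^{N+1}:0\le k<p^{N+1}\}$ — a group closed under $\mu\mapsto A^{-1}(\mu-\epsilon_\mu)$ — each such $y$ again lies in $supp(h)$, so $\lVert\nabla_y S_h^n\delta\rVert_q\le\lVert\mathcal{A}^n_{\lvert V}\rVert_q\lVert\nabla_y\delta\rVert\le C'(\theta+\eta)^n$, giving $\lVert f_{n+1}-f_n\rVert_q\le C''(p^{-1/q}(\theta+\eta))^n$ with $p^{-1/q}(\theta+\eta)<1$. Thus $(f_n)$ is Cauchy and converges to a compactly supported $\phi\in L_q(\mathbb{Q}_p)$ ($\phi\in C(\mathbb{Q}_p)$ if $q=\infty$); passing to the limit in $\widehat{f_{n+1}}(\xi)=H(A^{-1}\xi)\widehat{f_n}(A^{-1}\xi)$ shows $\phi$ is refinable with mask $h$, $\widehat\phi(0)=1$, and its shifts are orthonormal by Theorem~\ref{th1.2}.

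\textbf{Main obstacle.} The hard part is the sufficiency direction: deducing the sum rule \eqref{eq4.3} from \eqref{eq4.7} via the $L_1$--$L_q$ comparison and the $O(p^n)$ bound on $\#supp(S_h^n\delta)$, and — above all — pinning down the finite set $D$ of difference directions produced by the telescoped error $S_g a_n$ and showing each $\nabla_y\delta$, $y\in D$, lies in $V$ (equivalently, that $\lVert\nabla_y S_h^n\delta\rVert_q$ decays geometrically at rate $<p^{1/q}$). This is the $p$-adic analogue of reducing an arbitrary finite difference to the ``coordinate'' differences, and it is precisely where the coset structure of $I_p/AI_p$ and the special support of $h$ come in.
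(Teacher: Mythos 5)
Your setup and the necessity half are sound and essentially coincide with the paper's argument (the paper phrases it contrapositively, but uses the same ingredients: Theorem \ref{th4.1}, Lemma \ref{lm3.2}, the bound from the proof of Lemma \ref{lm3.4}, and Theorem \ref{th3.5}). The genuine gap is in the sufficiency half, at exactly the point you flag as the main obstacle. After telescoping, $\lVert f_{n+1}-f_n\rVert_q$ is controlled by $\lVert\nabla_y S_h^n\delta\rVert_q$ for directions $y=A^{-1}(\mu-\epsilon)$ with $\mu\in supp(g)$, $\epsilon\in E$, and you dispose of these by asserting that each such $y$ again lies in $supp(h)$ because $\{k/p^{N+1}:0\le k<p^{N+1}\}$ is a group closed under $\mu\mapsto A^{-1}(\mu-\epsilon_\mu)$. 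That closure only places $y$ in the group $\{m/p^{N}:0\le m<p^{N}\}$; it does not place $y$ in the support of $h$, which is an arbitrary subset of that group (and Theorem \ref{th4.2} does not even assume $supp(h)\subseteq\{k/p^{N+1}\}$). Concretely, for $p=2$ and the mask $h(1/4)=h(3/4)=1$ (all other coefficients zero) one gets $E=\{0,1/2\}$, every $w_\epsilon$ is a multiple of $\nabla_{1/2}\delta$, and $1/2\notin supp(h)$, so \eqref{eq4.7} provides no geometric bound for $\lVert\nabla_{1/2}S_h^n\delta\rVert_q$ and your Cauchy estimate is not established. The same defect infects your derivation of the sum rule \eqref{eq4.3}: the right-hand side of \eqref{eq4.6} involves $\lVert\nabla_\alpha a_n\rVert_1$ for $\alpha\in E$, again directions not covered by \eqref{eq4.7}. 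The paper's sufficiency proof is engineered to avoid this global closure issue: it compares $f_{n+1}=Q_h^{n+1}\phi_0$ with $g_n=Q_h^n\phi$ pointwise on the cells $X_n(\eta)$, uses the sum rule to write $a_{n+1}(\beta)-a_n(\alpha)=\sum_{\zeta}h(\beta-A\zeta)[a_n(\zeta)-a_n(\alpha)]$ with all relevant indices confined to a fixed ball $\eta+B_N(0)$, decomposes these \emph{local} differences into terms $\nabla_\gamma a_n(\nu)$ with $\gamma\in supp(h)\setminus\{0\}$, and then integrates to obtain the key estimate \eqref{eq4.8} in terms of $b_n(\alpha)=\max_{\gamma\in supp(h)}\lvert\nabla_\gamma a_n(\alpha)\rvert$.

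Two smaller points. First, you only prove convergence of the cascade started from $\phi_0$; the paper's estimate \eqref{eq4.8} also yields $\lVert Q_h^n\phi-f\rVert_q\to 0$ for an arbitrary compactly supported orthogonal $\phi$ with $\sum_{\alpha\in I_p}\phi(\cdot-\alpha)=1$, which is what convergence of the subdivision scheme is taken to mean. Second, your closing appeal to Theorem \ref{th1.2} to conclude that the limit has orthonormal shifts is unsupported: that theorem requires $H(k)=0$ for $k$ not divisible by $p$ and $\lvert H(k)\rvert=1$ for $k$ divisible by $p$, conditions that are neither assumed nor derivable from \eqref{eq4.7}.
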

\begin{proof}
 Let $ A_{\epsilon} $\, be the linear operators on $ l_{0}(I_{p}) $\, given by \eqref{eq3.3}, and let $ V $\, be the minimal common invariant subspace of $ A_{\epsilon}\,(\epsilon \in E)$\, generated by $ \nabla_{\gamma}\delta, \, \gamma \in supp(h)\setminus\{0\} $. Then $ V $\, is finite dimensional, and by Theorem \ref{th3.5} we have
$$ \rho_{q} := \rho_{q}(\{A_{\epsilon \lvert V} : \epsilon \in E \}) = \max_{\gamma \in supp(h)\setminus \{0\}} \{ \lim_{n \rightarrow \infty} \lVert \nabla_{\gamma}S_{h}^{n}\delta \rVert_{q}^{1/n}\}. $$
Thus, \eqref{eq4.7} is equivalent to $ \rho_{q}(\{A_{\epsilon \lvert V} : \epsilon \in E \})<p^{1/q} $. \par 
Let $ \mathcal{A} := \{A_{\epsilon \lvert V} : \epsilon \in E\} $. If $ \rho_{q}(\{A_{\epsilon \lvert V} : \epsilon \in E \}) \geq p^{1/q} $, then we have
$$ \inf_{n\geq 1}\lVert \mathcal{A}^{n}\rVert_{q}^{1/n} = \lim_{n \rightarrow \infty} \lVert \mathcal{A}^{n}\rVert_{q}^{1/n} \geq p^{1/q}. $$
It follows that
$$ p^{-n/q}\lVert \mathcal{A}^{n}\rVert_{q} \geq 1, \qquad \forall n. $$
From the proof of Lemma \ref{lm3.4}, we see that there exists a positive constant $ C $\, such that $$ \lVert \mathcal{A}^{n}\rVert_{q} \leq C \max_{\gamma \in supp(h)\setminus \{0\}} \lVert \mathcal{A}^{n}\nabla_{\gamma} \delta \rVert_{q}, \text{ for all } n. $$ Moreover, by Lemma \ref{lm3.2}, we have $ \lVert \mathcal{A}^{n}\nabla_{\gamma} \delta \rVert_{q} = \lVert \nabla_{\gamma}S_{h}^{n}\delta \rVert_{q} $. Hence,
$$ \rho_{q} \geq p^{1/q} \Rightarrow \max_{\gamma \in supp(h)\setminus \{0\}}\{p^{-n/q}\lVert \nabla_{\gamma}S_{h}^{n}\delta \rVert_{q} \} \geq 1/C. $$
Thus, the subdivision scheme associated with $ h $\, is not $ L_{q} $-convergent, by Theorem \ref{th4.1}. This shows that \eqref{eq4.7} is necessary for the subdivision scheme to converge in the $ L_{q} $-norm.\par 
In order to prove the sufficiency part of the theorem, we pick a compactly supported orthogonal function $ \phi $\, in $ L_{q}(\mathbb{Q}_{p}) $\, such that $ \sum_{\alpha \in I_{p}}\phi(\cdot -\alpha) = 1 $. (In the case $ q = \infty $, we assume that $ \phi $\, is continuous.) Let $ f_{n} := Q_{h}^{n}\phi_{0} $\, and $ g_{n} := Q_{h}^{n}\phi $\,
for $ n = 1, 2, \ldots $, where $ Q_{h} $\, is the operator given in \eqref{eq1.13} and $ \phi_{0} $ is the function given by $ \phi_{0} := 1_{B_{0}(0)} $. Then $ \phi_{0} $\, satisfies $ \sum_{\beta \in I_{p}}\phi_{0}(\cdot -\beta) = 1 $. Moreover, we have $ a_{n} = S_{h}^{n}\delta $\, and let $ b_{n} $\, be the sequence given by
$$ b_{n}(\alpha) = \max_{\gamma \in supp(h)} \lvert \nabla_{\gamma} a_{n}(\alpha)\rvert,\quad \alpha \in I_{p}.$$
We claim that there exists a positive constant $ C $\, independent of $ n $, such that
\begin{equation}\label{eq4.8}
 \lVert f_{n+1}-g_{n} \rVert_{q} \leq Cp^{-n/q}\lVert b_{n} \rVert_{q}, \quad 1 \leq q \leq \infty. 
\end{equation}
If $ \rho_{q} < p^{1/q} $, then we can find $ r,\, 0 < r < 1 $, such that $ \rho_{q} < rp^{1/q} $. By the definition of $ \rho_{q} $\, and Theorem \ref{th3.5}, we see that $ \lVert b_{n} \rVert_{q} < rp^{1/q} $\, is valid for sufficiently large $ n $. Hence, there exists a positive constant $ C_{0} $\, such that $ \lVert b_{n} \rVert_{q} \leq C_{0} (rp^{1/q})^{n} $\, for all $ n \geq 1 $. If we choose $ \phi $\, to be $ \phi_{0} $, then it follows from \eqref{eq4.8} that
$$ \lVert f_{n+1}-f_{n} \rVert_{q} \leq CC_{0}r^{n}, \quad n = 1, 2, \ldots  . $$
This shows that the sequence $ f_{n} $\, converges to a function $ f $\, in the $ L_{q} $-norm. Furthermore, \eqref{eq4.8} tells us that $ \lVert f_{n+1}-g_{n} \rVert_{q} \rightarrow 0 $\, as $ n \rightarrow \infty $. However,
$$ \lVert g_{n}-f \rVert_{q} \leq \lVert g_{n}-f_{n+1} \rVert_{q} + \lVert f_{n+1}-f \rVert_{q}, $$
by the triangle inequality. Therefore, $ \lVert g_{n}-f \rVert_{q} \rightarrow 0 $\, as $ n \rightarrow \infty $. Thus, it suffices to
prove \eqref{eq4.8}. \par 
 For a sequence $ \lambda \in l_{\infty}(I_{p}) $\, and a subset $ G $\, of $ \mathbb{Q}_{p} $, we use $ \lVert \lambda \rVert_{\infty}(G) $\, to denote the supremum of $ \lambda $\, on the set $ I_{p} \cap G $. Moreover, for $ n = 1, 2, \ldots $, let
$$ X_{n}(\eta) := A^{-n}(B_{0}(0)+\eta), \quad \eta \in I_{p}. $$
Let $ x $\, be a point in $ \mathbb{Q}_{p} $. By \eqref{eq3.1} we have
$$ f_{n+1}(x) = \sum_{\beta \in I_{p}} a_{n+1}(\beta)\phi_{0}(A^{n+1}x-\beta) \text{   and   } g_{n}(x) = \sum_{\alpha \in I_{p}} a_{n}(\alpha)\phi(A^{n}x-\alpha). $$
Since $ \sum_{\beta \in I_{p}}\phi_{0}(\cdot -\beta) = 1 $\, and $ \sum_{\alpha \in I_{p}}\phi(\cdot -\alpha) = 1 $, it follows that
\begin{equation}\label{eq4.9}
 f_{n+1}(x) -g_{n}(x) = \sum_{\alpha \in I_{p}} \sum_{\beta \in I_{p}} [a_{n+1}(\beta)-a_{n}(\alpha)]\phi(A^{n}x-\alpha)\phi_{0}(A^{n+1}x-\beta).
\end{equation}
In the above sum we only have to consider those terms for which $ \phi(A^{n}x -\alpha) \neq 0 $\, and $ \phi_{0}(A^{n+1}x-\beta) \neq 0 $.\par 
Let $ x \in X_{n}(\eta) $, where $ \eta \in I_{p} $ is fixed for the time being. Suppose $ \phi(A^{n}x-\alpha) \neq 0 $. Then we have $ A^{n}x -\eta \in B_{0}(0) $ and $ A^{n}x -\alpha \in supp(\phi) $. It follows that
\begin{equation}\label{eq4.10}
 \alpha = \eta +(A^{n}x-\eta)-(A^{n}x-\alpha) \in \eta+B_{0}(0)-supp(\phi).
\end{equation}
Suppose $ \phi_{0}(A^{n+1}x-\beta) \neq 0 $. Then $ A^{n+1}x-\beta \in  supp(\phi_{0}) $. This, in connection with $ A^{n}x -\alpha \in supp(\phi) $, yields
\begin{equation}\label{eq4.11}
 A\alpha-\beta=(A^{n+1}x-\beta)-A(A^{n}x-\alpha) \in supp(\phi_{0})-A \,supp(\phi).
\end{equation}
Moreover, Theorem \ref{th4.1} tells us that \eqref{eq4.7} implies $ \sum_{\zeta \in I_{p}} h(\beta -A\zeta) = 1 $ for all $ \beta \in I_{p} $; hence, we have
\begin{equation}\label{eq4.12}
 a_{n+1}(\beta)-a_{n}(\alpha) = \sum_{\zeta \in I_{p}} h(\beta-A\zeta)[a_{n}(\zeta)-a_{n}(\alpha)].
\end{equation}
We observe that $ h(\beta -A\zeta) \neq 0 $ implies $ \beta -A\zeta \in supp(h) $. This, together with \eqref{eq4.11}, gives
\begin{equation}\label{eq4.13}
 A(\alpha-\zeta) = (A\alpha-\beta)+(\beta-A\zeta) \in supp(\phi_{0})-A\,supp(\phi)+supp(h).
\end{equation}
In light of \eqref{eq4.10} and \eqref{eq4.13}, there exists a positive integer $ N $\, such that both $ \alpha $\, and $ \zeta $\, belong to $ \eta + B_{N}(0) $, provided $ \phi(A^{n}x-\alpha) \neq 0, \phi_{0}(A^{n+1}x-\beta)\neq 0 $, and $ h(\beta-A\zeta) \neq 0 $. However, $ a_{n}(\zeta)-a_{n}(\alpha) $ can be written as a sum of finitely many terms of the form $ \nabla_{\gamma}a_{n}(\nu) $, where $ \nu \in \eta + B_{N}(0) \cap I_{p} $ and $ \gamma \in supp(h)\setminus\{0\} $. Therefore, \eqref{eq4.12} tells us that there exists a positive constant $ C $\, independent of $ n $\, such that
\begin{equation}\label{eq4.14}
 \lvert a_{n+1}(\beta) - a_{n}(\alpha) \rvert \leq C \lVert b_{n} \rVert_{\infty}(\eta+B_{N}(0))
\end{equation}
provided $ \phi(A^{n}x -\alpha)\phi_{0}(A^{n+1}x -\beta) \neq 0 $ for some $ x \in X_{n}(\eta) $.
We observe that $ \sum_{\beta \in I_{p}} \lvert \phi_{0}(A^{n+1}x-\beta) \rvert = 1 $. Consequently, by \eqref{eq4.9} and \eqref{eq4.14} we obtain
\begin{equation}\label{eq4.15}
 \lvert f_{n+1}(x)-g_{n}(x) \rvert \leq C \lvert \phi \rvert^{\circ} (A^{n}x) \lVert b_{n} \rVert_{\infty}(\eta+B_{N}(0)) \quad \text{for }x \in X_{n}(\eta),
\end{equation}
where $ \lvert \phi \rvert^{\circ} $ denotes the 1-periodization of $ \lvert \phi \rvert $:
$$ \lvert \phi \rvert^{\circ}(x) := \sum_{\alpha \in I_{p}} \lvert \phi(x-\alpha) \rvert, \quad x \in \mathbb{Q}_{p}. $$
In the case $ q = \infty $, $ \phi $\, is a continuous function with compact support; hence, there exists a constant $ C_{1} > 0 $ such that $\lvert \phi \rvert^{\circ}(x) \leq C_{1} $ for all $ x \in \mathbb{Q}_{p} $. It follows from \eqref{eq4.15} that $ \lVert f_{n+1}-g_{n} \rVert_{\infty} \leq C C_{1}\lVert b_{n} \rVert_{\infty} $. This proves \eqref{eq4.8} for the case $ q = \infty $.\par 
For $ 1 \leq q < \infty $, we deduce from \eqref{eq4.15} that
$$ \int_{X_{n}(\eta)} \lvert f_{n+1}(x)-g_{n}(x) \rvert^{q} dx \leq C^{q}\left( \int_{X_{n}(\eta)} [\lvert \phi \rvert^{\circ} (A^{n}x)]^{q}dx\right) \sum_{\alpha \in \eta+B_{N}(0)} \lvert b_{n}(\alpha) \rvert^{q}. $$
Since $ \phi \in L_{q}(\mathbb{Q}_{p}) $ is compactly supported, we have
$$ \lvert \phi \rvert^{\circ}(x) := \sum_{\alpha \in I_{p}\cap(B_{0}(0)-supp(\phi))} \lvert \phi(x-\alpha) \rvert, \quad x \in B_{0}(0). $$
Hence, $ C_{2} := \int_{B_{0}(0)}\lvert \phi \rvert^{\circ}(x) ^{q} dx < \infty $. Consequently,
$$ \int_{X_{n}(\eta)} [\lvert \phi \rvert^{\circ} (A^{n}x)]^{q}dx = p^{-n} \int_{\eta+B_{0}(0)} [ \lvert \phi \rvert^{\circ} (x) ]^{q} dx = C_{2}p^{-n}. $$ 
Finally, we obtain
$$ \begin{aligned}
    \lVert f_{n+1}-g_{n} \rVert_{q}^{q} & = \sum_{\eta \in I_{p}} \int_{X_{n}(\eta)} \lvert f_{n+1}(x)-g_{n}(x) \rvert^{q} dx \\
    & \leq C^{q}C_{2}p^{-n} \sum_{\eta \in I_{p}} \sum_{\alpha \in \eta+B_{N}(0)} \lvert b_{n}(\alpha) \rvert^{q}.
   \end{aligned}
 $$
However,
$$ \sum_{\eta \in I_{p}} \sum_{\alpha \in \eta+B_{N}(0)} \lvert b_{n}(\alpha) \rvert^{q} = \sum_{\alpha \in I_{p}} \lvert b_{n}(\alpha) \rvert^{q} \sum_{\eta \in \alpha+B_{N}(0)} 1 = p^{N+1} \sum_{\alpha \in I_{p}} \lvert b_{n}(\alpha) \rvert^{q}. $$
The preceding discussion establishes that
$$ \lVert f_{n+1}-g_{n} \rVert_{q} \leq C_{3} p^{-n/q}\lVert b_{n} \rVert_{q} $$
for some constant $ C_{3} > 0 $.
\end{proof}
\par 
Suppose $ K = supp(h) $. Then $ K $\, is an admissible set for every $ A_{\epsilon},\, \epsilon \in  E $, and $ l(K) $\, contains $ \nabla_{\gamma}\delta $\, for $ \gamma \in K \setminus \{0\} $. Let
$$ V := \{ v \in l(K): \sum_{\alpha \in I_{p}}v(\alpha) = 0 \}. $$
If $ \sum_{\beta \in I_{p}}h(\alpha-A\beta) = 1 $, then $ V $\, is invariant under every $ A_{\epsilon},\, \epsilon \in E $. Thus, we can
restate Theorem \ref{th4.2} as follows.
\begin{theorem}\label{th4.3}
 Under the conditions of Theorem \ref{th4.2}, the subdivision scheme
associated with $ h $\, converges to a compactly supported orthogonal function in the $ L_{q} $-norm $ (1 \leq q \leq \infty) $\, if and only if the following two conditions are satisfied:
\begin{enumerate}
 \item $ \sum_{\beta \in I_{p}}h(\alpha-A\beta) = 1, \qquad \forall \alpha  \in I_{p} $.
 \item $ \rho_{q}(\{A_{\epsilon \lvert V(v)} : \epsilon \in E \}) <p^{1/q} $.
\end{enumerate}
\end{theorem}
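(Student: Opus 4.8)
The plan is to read off Theorem~\ref{th4.3} from Theorems~\ref{th4.1} and~\ref{th4.2}; no new estimates are required, only a bookkeeping identification of subspaces. Recall that Theorem~\ref{th4.2} asserts that the subdivision scheme converges in the $L_q$-norm to a compactly supported orthogonal function if and only if \eqref{eq4.7} holds, and that in the course of its proof \eqref{eq4.7} was shown, via Theorem~\ref{th3.5} (in particular \eqref{eq3.11}), to be equivalent to $\rho_q(\{A_{\epsilon|W}:\epsilon\in E\})<p^{1/q}$, where $W$ is the minimal common $A_\epsilon$-invariant subspace ($\epsilon\in E$) generated by $\{\nabla_\gamma\delta:\gamma\in \mathrm{supp}(h)\setminus\{0\}\}$. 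So it suffices to prove that, under the standing hypotheses, the conjunction of (1) and (2) is equivalent to \eqref{eq4.7}.

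First I would dispose of (1). If the scheme converges, then \eqref{eq4.7} holds by Theorem~\ref{th4.2}, and then the second assertion of Theorem~\ref{th4.1}, equation \eqref{eq4.3}, gives $\sum_{\beta\in I_p}h(\alpha-A\beta)=1$ for all $\alpha\in I_p$, which is exactly (1). Conversely, assume (1). As recorded in the paragraph preceding the theorem, $K:=\mathrm{supp}(h)$ is then admissible for every $A_\epsilon$ and $V:=\{v\in l(K):\sum_{\alpha\in I_p}v(\alpha)=0\}$ is $A_\epsilon$-invariant, so the joint spectral radius in (2) (read with this $V$) is well defined. The crux is that this $V$ coincides with the space $W$ above: $V$ is invariant and contains every $\nabla_\gamma\delta=\delta-\delta(\cdot-\gamma)$, $\gamma\in K\setminus\{0\}$, so $W\subseteq V$; and, using $0\in K$, any $v\in V$ can be written as $v=-\sum_{\gamma\in K\setminus\{0\}}v(\gamma)\,\nabla_\gamma\delta$, so $V$ is the linear span of the $\nabla_\gamma\delta$ and hence $V\subseteq W$. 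Thus $V=W$, and condition (2) is precisely $\rho_q(\{A_{\epsilon|W}:\epsilon\in E\})<p^{1/q}$, which is equivalent to \eqref{eq4.7}.

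It remains to assemble the implications. If the scheme converges then (1) holds and, by Theorem~\ref{th4.2}, \eqref{eq4.7} holds; since (1) holds we have $V=W$, so \eqref{eq4.7} translates into (2), and (1) and (2) both hold. Conversely, if (1) and (2) hold then $V=W$ converts (2) into \eqref{eq4.7}, and Theorem~\ref{th4.2} yields convergence. I do not expect a genuine obstacle here, since all the analytic work lives in the proofs of Theorems~\ref{th4.1} and~\ref{th4.2}; the one point that needs care is the identification $V=W$, which rests on the (implicitly assumed) fact that $0\in\mathrm{supp}(h)$ — so that the differences $\nabla_\gamma\delta$ already span all sum-zero sequences supported on $\mathrm{supp}(h)$ — together with the $A_\epsilon$-invariance of $V$ supplied by hypothesis (1).
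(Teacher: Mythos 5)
Your proof is correct and follows essentially the same route as the paper's: the paper likewise observes that the differences $\nabla_{\gamma}\delta$, $\gamma\in \mathrm{supp}(h)\setminus\{0\}$, span $V$, identifies $\rho_{q}(\{A_{\epsilon\lvert V}:\epsilon\in E\})$ with $\max_{\gamma}\lim_{n\rightarrow\infty}\lVert\nabla_{\gamma}S_{h}^{n}\delta\rVert_{q}^{1/n}$ via Lemma~\ref{lm3.2} and Theorem~\ref{th3.5}, and then concludes from Theorems~\ref{th4.1} and~\ref{th4.2}. Your explicit identification of $V$ with the minimal $\mathcal{A}$-invariant subspace generated by the $\nabla_{\gamma}\delta$ (resting on $0\in \mathrm{supp}(h)$ and the invariance of $V$ supplied by condition (1)) is the same bookkeeping the paper leaves implicit.
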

\begin{proof}
 For $ \gamma \in K \setminus \{0\} $, $ \nabla_{\gamma}\delta \in V $\, span $ V $. Let $ \mathcal{A} := \{A_{\epsilon \lvert V} : \epsilon \in E\} $. Then we have
$$ \lVert \mathcal{A}^{n}\nabla_{\gamma}\delta \rVert_{q} = \lVert \nabla_{\gamma}S_{h}^{n}\delta \rVert_{q}. $$
This shows that
$$ \rho_{q}(\{A_{\epsilon \lvert V(v)} : \epsilon \in E \}) = \max_{\gamma \in K\setminus \{0\}} \{ \lim_{n \rightarrow \infty} \lVert \nabla_{\gamma}S_{h}^{n}\delta \rVert_{q}^{1/n}\}. $$
Thus, Theorem \ref{th4.3} follows from Theorem \ref{th4.2} at once.
\end{proof}

\begin{example}
  Let $ p = 3 $\, and $ h(0) = h(1/3) = h(2/3) = 1, h(\alpha) = 0 $, for all other $ \alpha \in I_{3} $. Here $ E = \{0,1/3,2/3\} $\, and $ K = \{ 0, 1/3,2/3 \} $. Then $ V = \{ v \in l(K) : \sum_{\alpha \in I_{3}}v(\alpha) = 0 \} $\, is the minimal invariant subspace generated by 
  $$ v_{1}(\alpha) = \delta-\delta_{1/3} = \begin{cases}
                                    1 & ; \alpha = 0 \\
                                    -1 & ; \alpha = 1/3 \\
                                    0 & ; \text{otherwise}
                                  \end{cases}
\text{     and      } 
v_{2}(\alpha) = \delta-\delta_{1/3} = \begin{cases}
                                    1 & ; \alpha = 0 \\
                                    -1 & ; \alpha = 2/3 \\
                                    0 & ; \text{otherwise}
                                  \end{cases}. $$
Then $ A_{0} = A_{1/3} = A_{2/3} = 0 $. Thus $ \rho_{q}(\{A_{\epsilon \lvert V}: \epsilon \in E\}) = 0 $. Thus the subdivision scheme associated with $h $\, is convergent.
\end{example}
\begin{example}
 Let $ p = 2 $. Suppose $ h(0) = h(2/4) = (1+i)/2 $, and $ h(1/4)= h(3/4) = (1-i)/2 $, $ h(\alpha) = 0 $, for all other $ \alpha \in I_{2} $. Here $ E = \{0,1/2\} $\, and $ K = \{ 0, 1/4,2/4,3/4 \} $. Then $ V = \{ v \in l(K) : \sum_{\alpha \in I_{2}}v(\alpha) = 0 \} $\, is the minimal invariant subspace generated by 
  $$ v_{1}(\alpha) = \delta-\delta_{1/4} = \begin{cases}
                                    1 & ; \alpha = 0 \\
                                    -1 & ; \alpha = 1/4 \\
                                    0 & ; \text{otherwise}
                                  \end{cases}
\text{     and      } 
v_{2}(\alpha) = \delta-\delta_{2/4} = \begin{cases}
                                    1 & ; \alpha = 0 \\
                                    -1 & ; \alpha = 2/4 \\
                                    0 & ; \text{otherwise}
                                  \end{cases},
                                  $$
     and $$     
v_{3}(\alpha) = \delta-\delta_{3/4} = \begin{cases}
                                    1 & ; \alpha = 0 \\
                                    -1 & ; \alpha = 3/4 \\
                                    0 & ; \text{otherwise}
                                  \end{cases}.
                                  $$
                                  
Then $$ A_{0} = A_{1/2} = \begin{bmatrix}
                 0 & 0 & 0 \\
                 i & 0 & i \\
                 0 & 0 & 0
                \end{bmatrix}.            
  $$ 
  Thus $ \rho_{q}(\{A_{\epsilon \lvert V}: \epsilon \in E\}) = 0 $. Thus the subdivision scheme associated with $h $\, is convergent.
\end{example}
\begin{example}
 Let $ p =2 $\, and $ h(0) = 2, h(\alpha) = 0 $, for all other $ \alpha \in I_{2} $. Then $ \sum_{\beta \in I_{2}} h(-A\beta) = 2 $. 
  That is $ \sum_{\beta \in I_{p}}h(\alpha-A\beta) \neq 1,\, \forall \alpha  \in I_{p} $. Thus by Theorem \ref{th4.3}, the subdivision scheme associated with this $ h $\, is not convergent. 
\end{example}

\section{Smoothness of refinable and wavelet functions}

In the case of the Euclidean space $ \mathbb{R}^{n} $, we use $ C^{m} \equiv C^{m}(\mathbb{R}^{n}) $\, to denote the function space of all $ m $-order continuous differentiable functions. In order to describe the smoothness of the functions deﬁned on $ \mathbb{R}^{n} $, we need $ 1 $-order continuity modulus as well as $ 2 $-order continuity modulus. That is, we need Lipschitz space as well as generalized Lipschitz space \cite{rqj2}. But for $ p $-adic fields, we just need $ 1 $-order continuity modulus, and just Lipschitz space. \par 
For any $ n \in \mathbb{Z} $\, we deﬁne,
$$ E_{n} := \{f : (f \text{ is locally constant})\, f(x - h) = f(x),\, \forall x \in \mathbb{Q}_{p},\, h \in B_{-n}(0) \}. $$
For any $ q \in [1, \infty] $, let $ E_{n,q} := E_{n} \cap L_{q}(\mathbb{Q}_{p}) $.
\begin{definition}\cite{ssp,wys}
 The best approximation of a function $ f \in L_{q}(\mathbb{Q}_{p}) $\, by functions in $ E_{n,q},\, n \in \mathbb{N} $\, is the number
\begin{equation}\label{eq5.1}
 E_{n}^{q}(f) := \inf \{ \lVert f - \Phi \rVert_{q}: \Phi \in E_{n,q} \}. 
\end{equation}
\end{definition}
 \begin{definition}\cite{ssp,wys}
  For $ f \in L_{q}(\mathbb{Q}_{p}) $\, and $ n \in \mathbb{N} $, let 
  \begin{equation}\label{eq5.2}
   \omega_{q}(f,n) := \sup \{ \lVert f-f(\cdot-h) \rVert_{q}: h \in B_{-n}(0) \}.
  \end{equation}
The sequence of numbers $ \{ \omega_{q}(f,n) \}_{n\in \mathbb{N}} $\, is called the modulus of continuity of $ f $\, in the space $ L_{q}(\mathbb{Q}_{p}) $. In general we have,
$$  \omega_{q}(f,\delta) := \sup_{h \in \mathbb{Q}_{p}, \lvert h \rvert_{p} \leq \delta} \{ \lVert f-f(\cdot-h) \rVert_{q} \}, \quad \delta >0. $$
 \end{definition}
\begin{definition}\cite{ssp,wys}
 A function $ f(x) $\, belongs to the Lipschitz space $ Lip(\alpha, L_{q}(\mathbb{Q}_{p})) $, if $ f \in L_{q}(\mathbb{Q}_{p}) $\, and for some constant $ c = c(f) > 0 $\, we have
\begin{equation}\label{eq5.3}
 \omega_{q}(f,n) \leq cp^{-\alpha n}, \qquad n \in \mathbb{N}.
\end{equation}
\end{definition}
The following theorem states that these three definitions are equivalent under certain conditions.
\begin{theorem}\label{th5.1}\cite{wys}
 For function spaces $ L_{q}(\mathbb{Q}_{p}) $, if $ s \geq 0 $, then the following statements are equivalent 
 \begin{enumerate}
  \item $ D^{s}f \in Lip(\alpha, L_{q}(\mathbb{Q}_{p})), \, \alpha > 0 $.
  \item $ \omega_{q}(D^{s}f,n) = O(p^{-\alpha n}),\, n \rightarrow \infty $.
  \item $ E_{n}^{q}(f) = O(p^{-n(\alpha+s)}),\, n \rightarrow +\infty $.
 \end{enumerate}
\end{theorem}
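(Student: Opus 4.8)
The plan is to view the three conditions through the ``band‑limited'' subspaces $E_{n,q}$ together with the fact that $D^{s}$ acts diagonally on the natural frequency decomposition of $\mathbb{Q}_{p}$. The equivalence (1)$\Leftrightarrow$(2) is immediate from the definition of $Lip(\alpha,L_{q}(\mathbb{Q}_{p}))$: under the standing hypothesis $D^{s}f\in L_{q}(\mathbb{Q}_{p})$ implicit in (1) and (2), membership of $D^{s}f$ in the Lipschitz space is by definition the bound $\omega_{q}(D^{s}f,n)\le cp^{-\alpha n}$. So the content is (2)$\Leftrightarrow$(3), and I would first record three structural facts. (a) $f\in E_{n,q}$ if and only if $\widehat{f}$ is supported in $B_{n}(0)$, since $\chi(h,\xi)=1$ for every $h\in B_{-n}(0)$ exactly when $|\xi|_{p}\le p^{n}$. (b) The averaging operator $P_{n}f:=p^{n}\int_{B_{-n}(0)}f(\cdot-h)\,dh$ is a norm‑one projection of $L_{q}(\mathbb{Q}_{p})$ onto $E_{n,q}$ for every $q\in[1,\infty]$ (a conditional expectation), it is the Fourier multiplier with symbol $1_{B_{n}(0)}$, and $P_{n}f\to f$ in $L_{q}$. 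Using $P_{n}\Phi=\Phi$ for $\Phi\in E_{n,q}$, Minkowski's integral inequality, and the fact that $g\mapsto g-g(\cdot-h)$ annihilates $E_{n,q}$ when $h\in B_{-n}(0)$, one obtains for every $g\in L_{q}(\mathbb{Q}_{p})$ the chain $E_{n}^{q}(g)\le\|g-P_{n}g\|_{q}\le\min\{\omega_{q}(g,n),\,2E_{n}^{q}(g)\}$ together with $\omega_{q}(g,n)\le 2\|g-P_{n}g\|_{q}$. (c) With $\Delta_{j}:=P_{j}-P_{j-1}$ (the Fourier multiplier with symbol $1_{\{|\xi|_{p}=p^{j}\}}$), the discussion following \eqref{eq1.2} shows that $D^{s}$ acts on the range of $\Delta_{j}$ as multiplication by $p^{js}$, so that formally $D^{s}=\sum_{j}p^{js}\Delta_{j}$.

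The key lemma I would isolate is a sharp frequency‑localized estimate: if $g\in L_{q}(\mathbb{Q}_{p})$ has $\widehat{g}$ supported in $\{\xi:|\xi|_{p}\ge p^{m}\}$, then $\|D^{-s}g\|_{q}\le p^{-ms}\|g\|_{q}$. This is where the ultrametric structure is decisive: since $P_{m-1}g=0$ and every $P_{j}$ is an $L_{q}$‑contraction, Abel summation over $\Delta_{j}=P_{j}-P_{j-1}$ gives $D^{-s}g=(1-p^{-s})\sum_{j\ge m}p^{-js}P_{j}g$, whence $\|D^{-s}g\|_{q}\le(1-p^{-s})\sum_{j\ge m}p^{-js}\|g\|_{q}=p^{-ms}\|g\|_{q}$. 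I expect this lemma, with its attendant bookkeeping, to be the main obstacle --- not because it is deep over $\mathbb{Q}_{p}$, but because it is the step where one must verify the bound \emph{with the correct power of $p$} in every $L_{q}$, $q\neq 2$, including the endpoint $q=\infty$; over $\mathbb{R}^{n}$ the analogue requires Mikhlin‑type multiplier theorems, whereas here the sharp projections $P_{j}$ reduce it to Abel summation. A parallel absolute‑convergence argument, using $\|\Delta_{j}f\|_{q}\le 2\|f\|_{q}$ and $s\ge 0$, shows that $D^{s}P_{n}f=\sum_{j\le n}p^{js}\Delta_{j}f$ lies in $L_{q}(\mathbb{Q}_{p})$, so $P_{n}$ and $D^{s}$ commute on the functions entering below (both being Fourier multipliers).

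For (2)$\Rightarrow$(3): the function $f-P_{n}f=\sum_{j>n}\Delta_{j}f$ has Fourier transform supported in $\{\xi:|\xi|_{p}\ge p^{n+1}\}$, hence so does $D^{s}(f-P_{n}f)=D^{s}f-P_{n}D^{s}f$; applying the lemma with $m=n+1$ to $g=D^{s}(f-P_{n}f)$ and using that $D^{-s}D^{s}$ is the identity away from the origin yields $\|f-P_{n}f\|_{q}\le p^{-(n+1)s}\|D^{s}f-P_{n}D^{s}f\|_{q}\le p^{-(n+1)s}\,\omega_{q}(D^{s}f,n)$. Together with $E_{n}^{q}(f)\le\|f-P_{n}f\|_{q}$ and hypothesis (2) this gives $E_{n}^{q}(f)=O(p^{-ns})\,O(p^{-\alpha n})=O(p^{-n(\alpha+s)})$.

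For (3)$\Rightarrow$(2) (which also recovers (1)): from $\|\Delta_{j}f\|_{q}\le\|f-P_{j}f\|_{q}+\|f-P_{j-1}f\|_{q}\le 2E_{j}^{q}(f)+2E_{j-1}^{q}(f)$ for large $j$, hypothesis (3), and $\|\Delta_{j}f\|_{q}\le 2\|f\|_{q}$ always, the series $\sum_{j\in\mathbb{Z}}p^{js}\|\Delta_{j}f\|_{q}$ converges (the tail $j\to+\infty$ because $\alpha>0$, the tail $j\to-\infty$ because $s>0$). Hence $\sum_{j}p^{js}\Delta_{j}f$ converges in $L_{q}(\mathbb{Q}_{p})$, and on the Fourier side it equals $|\xi|_{p}^{s}\widehat{f}$; thus $D^{s}f\in L_{q}(\mathbb{Q}_{p})$, and since $P_{n}\Delta_{j}f=0$ for $j>n$ we get $D^{s}f-P_{n}D^{s}f=\sum_{j>n}p^{js}\Delta_{j}f$. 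Therefore $\omega_{q}(D^{s}f,n)\le 2\|D^{s}f-P_{n}D^{s}f\|_{q}\le 2\sum_{j>n}p^{js}\|\Delta_{j}f\|_{q}=O(p^{-n\alpha})$, which is (2). When $s=0$ the series degenerate and one recovers the classical Jackson--Bernstein equivalence; the only points requiring genuine care are the $L_{q}$‑convergence of these Littlewood--Paley series at both ends and the uniformity of the $O$‑constants when $q=\infty$.
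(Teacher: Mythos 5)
The paper itself offers no proof of Theorem \ref{th5.1}: it is imported from \cite{wys} as a known result, so there is no internal argument to compare yours against. Judged on its own, your proposal is the standard Jackson--Bernstein argument adapted to $\mathbb{Q}_{p}$, and it is correct in outline. The backbone is sound: the conditional expectation $P_{n}f=p^{n}\int_{B_{-n}(0)}f(\cdot-h)\,dh$ is indeed a norm-one projection onto $E_{n,q}$, your two-sided comparison $E_{n}^{q}(g)\le\lVert g-P_{n}g\rVert_{q}\le\min\{\omega_{q}(g,n),2E_{n}^{q}(g)\}$ and $\omega_{q}(g,n)\le 2\lVert g-P_{n}g\rVert_{q}$ is exactly the ultrametric simplification that makes the $s=0$ case immediate, the action of $D^{s}$ as multiplication by $p^{js}$ on the range of $\Delta_{j}$ is the paper's Remark 2.2, and the Abel-summation (Bernstein-type) lemma with constant $p^{-ms}$ checks out, as do the two implications (2)$\Rightarrow$(3) and (3)$\Rightarrow$(2) built on it. This is presumably also how the cited reference proceeds, via best approximation by locally constant functions and the multiplier action of $D^{s}$ on the spheres $\lvert\xi\rvert_{p}=p^{j}$.

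If you were to write this out in full, the points needing genuine care are exactly the ones you gesture at but do not discharge: (i) the sense in which $D^{s}$ and $D^{-s}$ act on $L_{q}$ --- for $q>2$ the Fourier transform is distributional and $D^{-s}$ is convolution with the non-integrable Riesz kernel $f_{s}$, so identities such as ``$D^{s}$ and $P_{n}$ commute'', ``$D^{-s}D^{s}=\mathrm{id}$ away from the origin'', and the identification of your $L_{q}$-convergent series $\sum_{j}p^{js}\Delta_{j}f$ with $D^{s}f$ in the sense of \eqref{eq1.2} must be justified (e.g.\ on Lizorkin-type test spaces, or by taking the series as the working definition and checking consistency); (ii) the endpoint $q=\infty$, where $P_{n}f\to f$ can fail for general $f\in L_{\infty}$ and constants lie in the kernel of $D^{s}$, so the step ``thus $D^{s}f\in L_{q}$'' in (3)$\Rightarrow$(2) needs the Fourier support at $\xi=0$ handled explicitly; and (iii) the degenerate case $s=0$, which you correctly reroute through fact (b) rather than the series. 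None of these is a conceptual gap --- they are the routine bookkeeping of Vladimirov's calculus --- but as written the proposal is a correct sketch rather than a complete proof.
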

\begin{definition}\cite{wys,sc}
 The H\"{o}lder type space on $ p $-adic fields, denoted by $ C^{\sigma}(\mathbb{Q}_{p}) $\, is the collection of all functions such that;
\begin{equation}\label{eq5.4}
 f \in C^{\sigma}(\mathbb{Q}_{p}) \Leftrightarrow D^{\lambda}f \in C^{\sigma-\lambda}(\mathbb{Q}_{p}), \quad \forall \, 0 \leq \lambda \leq \sigma.
\end{equation}
where $ D^{\lambda}f $\, is the pseudo differential operator defined by \eqref{eq1.2}. 
\end{definition}
The next theorem gives the relationship between the H\"{o}lder type spaces and the Lipschitz space on $ \mathbb{Q}_{p} $.
\begin{theorem}\label{th5.2}\cite{sc}
 For a $ p $-adic field $ \mathbb{Q}_{p} $, we have $ Lip(\alpha, L_{q}(\mathbb{Q}_{p})) = C^{\alpha}(\mathbb{Q}_{p}), \, \alpha > 0 $.
\end{theorem}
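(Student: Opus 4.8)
The plan is to obtain Theorem \ref{th5.2} as a direct consequence of Theorem \ref{th5.1}, translating back and forth between the three equivalent quantities appearing there: the modulus of continuity $\omega_{q}(\cdot,n)$, the best approximation $E_{n}^{q}(\cdot)$, and membership in $Lip(\cdot,L_{q}(\mathbb{Q}_{p}))$. I use that the recursion \eqref{eq5.4} is anchored in the range $0<\sigma\le 1$, where $C^{\sigma}(\mathbb{Q}_{p})$ coincides with $Lip(\sigma,L_{q}(\mathbb{Q}_{p}))$, and that the $p$-adic fractional operators obey the composition law $D^{\lambda}D^{\mu}=D^{\lambda+\mu}$; hence it suffices to treat $\alpha>1$.

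\textbf{The inclusion $C^{\alpha}(\mathbb{Q}_{p})\subseteq Lip(\alpha,L_{q}(\mathbb{Q}_{p}))$.} Let $f\in C^{\alpha}(\mathbb{Q}_{p})$ and set $\lambda_{0}:=\alpha-1\in(0,\alpha]$. By the defining property \eqref{eq5.4}, $D^{\lambda_{0}}f\in C^{\alpha-\lambda_{0}}(\mathbb{Q}_{p})=C^{1}(\mathbb{Q}_{p})=Lip(1,L_{q}(\mathbb{Q}_{p}))$, so $\omega_{q}(D^{\lambda_{0}}f,n)=O(p^{-n})$. Applying Theorem \ref{th5.1} with $s=\lambda_{0}$ and exponent $1$, the equivalence of statements (2) and (3) gives $E_{n}^{q}(f)=O(p^{-n(1+\lambda_{0})})=O(p^{-n\alpha})$. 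Applying Theorem \ref{th5.1} once more, now with $s=0$ and exponent $\alpha$, the equivalence of (3) with (1)--(2) yields $\omega_{q}(f,n)=O(p^{-n\alpha})$, that is, $f\in Lip(\alpha,L_{q}(\mathbb{Q}_{p}))$.

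\textbf{The inclusion $Lip(\alpha,L_{q}(\mathbb{Q}_{p}))\subseteq C^{\alpha}(\mathbb{Q}_{p})$.} Let $f\in Lip(\alpha,L_{q}(\mathbb{Q}_{p}))$. By Theorem \ref{th5.1} with $s=0$, $E_{n}^{q}(f)=O(p^{-n\alpha})$; then by Theorem \ref{th5.1} with $s=\lambda$ and exponent $\alpha-\lambda$ we get $\omega_{q}(D^{\lambda}f,n)=O(p^{-(\alpha-\lambda)n})$, i.e. $D^{\lambda}f\in Lip(\alpha-\lambda,L_{q}(\mathbb{Q}_{p}))$, for every $\lambda\in[0,\alpha)$ (with $D^{\alpha}f\in L_{q}(\mathbb{Q}_{p})$ at the endpoint). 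To verify \eqref{eq5.4} it remains to upgrade each ``$Lip(\alpha-\lambda,L_{q})$'' to ``$C^{\alpha-\lambda}$''. For $\lambda$ with $\alpha-\lambda\le 1$ this is the anchoring identity; for $\alpha-\lambda>1$ one argues by induction on $\lceil\alpha\rceil$, using $D^{\mu}(D^{\lambda}f)=D^{\lambda+\mu}f$ to see that the single estimate $E_{n}^{q}(f)=O(p^{-n\alpha})$ already encodes the right decay at every intermediate order, so that the inductive hypothesis at the strictly smaller indices delivers $D^{\lambda}f\in C^{\alpha-\lambda}(\mathbb{Q}_{p})$. Combining the two inclusions gives $Lip(\alpha,L_{q}(\mathbb{Q}_{p}))=C^{\alpha}(\mathbb{Q}_{p})$.

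\textbf{Main obstacle.} The analytic content is entirely carried by Theorem \ref{th5.1}; the delicate part is purely structural, namely the self-referential definition \eqref{eq5.4}. One has to record its base case ($C^{\sigma}=Lip(\sigma,L_{q})$ for $0<\sigma\le 1$), treat the endpoint $\lambda=\alpha$ (where $C^{0}$ enters) and the trivial instance $\lambda=0$, and arrange the induction in the second inclusion so that it genuinely descends — which is precisely where the semigroup identity $D^{\lambda}D^{\mu}=D^{\lambda+\mu}$ does the work, collapsing all the intermediate H\"older conditions onto the single decay rate of $E_{n}^{q}(f)$.
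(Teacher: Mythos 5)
The paper itself offers no proof of Theorem \ref{th5.2}: it is imported verbatim from the reference \cite{sc}, so there is no internal argument to compare yours with, and your proposal has to stand on its own. It does not, because of a circularity you yourself flag as the ``main obstacle'' but then assume away. Your whole scheme is anchored on the identity $C^{\sigma}(\mathbb{Q}_{p})=Lip(\sigma,L_{q}(\mathbb{Q}_{p}))$ for $0<\sigma\le 1$, but nothing in the paper provides this: the definition \eqref{eq5.4} of $C^{\sigma}(\mathbb{Q}_{p})$ is purely recursive ($f\in C^{\sigma}\Leftrightarrow D^{\lambda}f\in C^{\sigma-\lambda}$ for all $0\le\lambda\le\sigma$), with no base case, no meaning assigned to $C^{0}$, and no statement relating $C^{\sigma}$ to $Lip(\sigma,L_{q})$ in any range. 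Your ``anchoring identity'' is precisely Theorem \ref{th5.2} restricted to $\alpha\le 1$, so for that range you are assuming what is to be proved, and for $\alpha>1$ your induction on $\lceil\alpha\rceil$ rests on an unproven base. The genuine analytic content of the theorem is exactly this base case (note that in the real-variable analogue $Lip(1)$ and $C^{1}$ \emph{differ}, so the coincidence at $\sigma\le 1$ is a theorem about $\mathbb{Q}_{p}$, not a definitional convention); it has to be extracted from the actual definition of the H\"{o}lder class in \cite{sc}, and Theorem \ref{th5.1} alone cannot supply it.

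The parts of your argument that use Theorem \ref{th5.1} are sound as bookkeeping: from $f\in Lip(\alpha,L_{q})$ you correctly get $E_{n}^{q}(f)=O(p^{-n\alpha})$ (case $s=0$), and then $\omega_{q}(D^{\lambda}f,n)=O(p^{-(\alpha-\lambda)n})$ for $0\le\lambda<\alpha$ (case $s=\lambda$), and conversely; this is the right way to move between the orders. But two further steps are asserted without justification: the semigroup law $D^{\lambda}D^{\mu}=D^{\lambda+\mu}$, which for the Vladimirov operator $D^{\alpha}\phi=f_{-\alpha}*\phi$ only holds under hypotheses (it is a convolution of distributions and needs $f$ in a suitable class, e.g.\ a Lizorkin-type space, to even make sense on $L_{q}$), and the endpoint conventions at $\lambda=\alpha$ (what $C^{0}$ is, and that $D^{\alpha}f\in L_{q}$). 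To repair the proof you would need to (i) state the concrete definition of $C^{\sigma}(\mathbb{Q}_{p})$ from \cite{sc}/\cite{wys} for $0<\sigma\le 1$ and prove $C^{\sigma}=Lip(\sigma,L_{q})$ there directly (this is where estimates on $f_{-\lambda}$ and the local constancy structure of $\mathbb{Q}_{p}$ actually enter), and (ii) verify the composition identity on the class of functions you apply it to; only then does your induction close.
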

\begin{definition}
 The optimal smoothness of a vector $ f \in L_{q}(\mathbb{Q}_{p}) $\, in the $ L_{q} $-norm is described by its critical exponent $ \nu_{q}(f) $\, defined by
 \begin{equation}\label{eq5.5}
  \nu_{q}(f) := \sup \{ \nu : f \in Lip(\nu,L_{q}(\mathbb{Q}_{p})) \}.
 \end{equation}
\end{definition}
\begin{theorem}\label{th5.3}
 Every locally constant, compactly supported function in $ L_{q}(\mathbb{Q}_{p}) $\, is infinitely smooth. 
\end{theorem}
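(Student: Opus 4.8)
The plan is to exploit the fact that a locally constant, compactly supported function is unchanged under translation by any sufficiently small element of $\mathbb{Q}_p$, so that all of its moduli of continuity $\omega_q(f,n)$ and best approximation errors $E_n^q(f)$ vanish once $n$ is large; infinite smoothness (i.e. $f\in C^\sigma(\mathbb{Q}_p)$ for every $\sigma>0$, equivalently $\nu_q(f)=\infty$) will then drop out of the equivalences in Theorems \ref{th5.1} and \ref{th5.2}.

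First I would show that $f$ belongs to $E_{n_0}$ for some integer $n_0$. Since $\operatorname{supp}(f)$ is contained in a ball $B_N(0)$, which is compact, and since local constancy provides, for every $x$, a ball $B_{\gamma(x)}(x)$ on which $f$ is constant, a finite subcover of $B_N(0)$ yields a common radius; combining this with the radius $p^{N}$ on the complement $\mathbb{Q}_p\setminus B_N(0)$ (itself a union of balls on which $f\equiv 0$) gives a single $n_0$ with $f(x-h)=f(x)$ for all $x\in\mathbb{Q}_p$ and all $h\in B_{-n_0}(0)$. Being bounded with compact support, $f\in L_q(\mathbb{Q}_p)$ for every $q$, hence $f\in E_{n_0,q}$.

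Next I would note that for every $n\ge n_0$ we have $B_{-n}(0)\subseteq B_{-n_0}(0)$, so $f-f(\cdot-h)=0$ for all $h\in B_{-n}(0)$; by \eqref{eq5.2} this gives $\omega_q(f,n)=0$ for $n\ge n_0$, and taking $\Phi=f\in E_{n,q}$ in \eqref{eq5.1} gives $E_n^q(f)=0$ for $n\ge n_0$. In particular, for every $s\ge 0$ and every $\alpha>0$ the estimate $E_n^q(f)=O(p^{-n(\alpha+s)})$ holds trivially, the left-hand side being eventually zero. Applying the implication $(3)\Rightarrow(1)$ of Theorem \ref{th5.1} with arbitrary $s\ge 0$ and $\alpha>0$ then yields $D^{s}f\in Lip(\alpha,L_q(\mathbb{Q}_p))$ for all such $s,\alpha$, and Theorem \ref{th5.2} turns this into $D^{s}f\in C^{\alpha}(\mathbb{Q}_p)$ for all $s\ge 0$ and all $\alpha>0$. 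Fixing any $\sigma>0$, condition \eqref{eq5.4} is then satisfied (for $0\le\lambda<\sigma$ one takes $\alpha=\sigma-\lambda$, and $D^{\sigma}f\in L_q(\mathbb{Q}_p)$ lies in every positive H\"{o}lder class), so $f\in C^{\sigma}(\mathbb{Q}_p)$; since $\sigma$ is arbitrary, $\nu_q(f)=\infty$ and $f$ is infinitely smooth.

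The only step that needs genuine care is the first one: upgrading pointwise local constancy to a single uniform radius of constancy, a routine compactness argument that must be phrased so as to also account for the region where $f$ vanishes. Once that finite cover is in place, Steps two and three are immediate consequences of the cited equivalences, so the theorem is essentially a corollary of Theorems \ref{th5.1} and \ref{th5.2}.
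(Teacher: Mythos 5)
Your proposal is correct and follows essentially the same route as the paper: uniform local constancy plus compact support make $f-f(\cdot-h)$ vanish identically for $h$ in a small ball, so $\omega_q(f,n)$ and $E_n^q(f)$ are eventually zero and Theorem \ref{th5.1} (with Theorem \ref{th5.2}) gives membership in $Lip(\alpha,L_q(\mathbb{Q}_p))$, i.e.\ $C^{\alpha}(\mathbb{Q}_p)$, for every $\alpha>0$. The only differences are cosmetic: you invoke condition (3) of Theorem \ref{th5.1} where the paper uses condition (2), and you make explicit the compactness argument upgrading pointwise local constancy to a single uniform radius, a step the paper simply asserts.
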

\begin{proof}
 Let $ \phi $\, be a locally constant compactly supported function in $ L_{q}(\mathbb{Q}_{p}) $. Then $ supp(\phi) = B_{N}(0) $\, for some $ N \in \mathbb{Z} $, and $ \phi(x-h)=\phi(x), \, \forall h \in B_{M}(0) $\, for some $ M \in \mathbb{Z} $.
We have \begin{equation}\label{eq5.6}
 \cdots \supset B_{2}(0) \supset B_{1}(0) \supset B_{0}(0) \supset B_{-1}(0) \supset B_{-2}(0) \supset \cdots. 
\end{equation}
Thus, as $ n $\, is increases, $ B_{-n}(0) \supset B_{N}(0) $\, and $ B_{-n}(0) \supset B_{M}(0) $. Thus $ \phi(x) = \phi(x-h),\, h \in B_{-n}(0) $. 
 $$ \lVert \phi -\phi(\cdot-h) \rVert_{q} = \left(\int_{B_{N}(0)} \lvert \phi(x)-\phi(x-h) \rvert^{q} dx\right)^{1/q} = 0 , \text{  as } n \rightarrow \infty.                                                      $$
   That is condition (2) of Theorem \ref{th5.1} is satisfied for all $ \alpha >0 $. Thus from Theorem \ref{th5.1}, we can see that $ \phi $\, is infinitely smooth. 
\end{proof}

\section{Conclusion}
The definitions of subdivision operators and subdivision scheme associated with a finitely supported refinement mask on $ I_{p} $\, are given in this work. The $ L_{q} $-convergence of the subdivision scheme is characterized in terms of the $ q $-norm joint spectral radii of a collection of operators associated with the refinement mask. We also provided some examples to illustrate the general theory. Finally, we provided the detailed analysis of smoothness of functions in $ L_{q}(\mathbb{Q}_{p}) $. In particular, we proved that any compactly supported function in $ L_{q}(\mathbb{Q}_{p}) $\, is infinitely smooth.

\section*{Acknowledgement}
We are very grateful to the authors of the articles in the references.

\bibliographystyle{IEEEtran}
\bibliography{article}

\end{document}